\DeclareMathOperator{\Mat}{Mat}
\newcommand{\A}{{\mathcal A}}
\newcommand{\E}{{\mathcal E}}
\renewcommand{\i}{\mathbf {in}}
\renewcommand{\o}{\mathbf {out}}
\newcommand{\M}{{\mathcal M}}
\newcommand{\n}{{\mathbf n}}
\renewcommand{\O}{{\mathcal O}}
\newcommand{\pp}{{\mathbf p}}
\newcommand{\qq}{{\mathbf q}}
\newcommand{\N}{{\mathbb N}}
\newcommand{\Ncal}{{\mathcal N}}
\newcommand{\C}{\ensuremath{\mathbb{C}}}
\newcommand{\R}{\ensuremath{\mathbb{R}}}
\newcommand{\p}{\partial}
\newcommand{\unit}{{\bf{1}}}
\newcommand{\V}{{\mathcal V}}
\newcommand{\Z}{\ensuremath{\mathbb{Z}}}
\newtheorem{lemma}{Lemma}
\newtheorem{proposition}{Proposition}
\newtheorem{theorem}{Theorem}
\begin{document}

\title[Quantization of endomorphism bundle]
{Deformation quantization with separation of variables of an endomorphism bundle}
\author[Alexander Karabegov]{Alexander Karabegov}
\address[Alexander Karabegov]{Department of Mathematics, Abilene
Christian University, ACU Box 28012, Abilene, TX 79699-8012}
\email{axk02d@acu.edu}

\begin{abstract} 
Given a holomorphic Hermitian vector bundle $E$ and a star-product with separation of variables on a pseudo-K\"ahler manifold, we construct a star product on the sections of the endomorphism bundle of the dual bundle $E^\ast$ which also has the appropriately generalized property of separation of variables. For this star product we prove a generalization of Gammelgaard's graph-theoretic formula.
\end{abstract}
\subjclass[2012]{53D55, 81T18}
\keywords{deformation quantization with separation of variables, Feynman diagram}
\date{April 21, 2013}
\maketitle

\section{Introduction}

Deformation quantization on a Poisson manifold $(M, \{\cdot,\cdot\})$ is an associative product on the space $C^\infty(M)[[\nu]]$ of $\nu$-formal smooth complex-valued functions given by the formula
\begin{equation}\label{E:star}
    f \ast g = \sum_{r \geq 0} \nu^r C_r (f,g),
\end{equation}
where $C_r$ are bidifferential operators on $M$, $C_0(f,g) = fg$, and 
\[
C_1(f,g) - C_1(g,f) = i\{f,g\}.
\]
The product $\ast$ is called a star product. It is assumed that star products are normalized, i.e., the unit constant function $\unit$ is the unity of a star product, $f \ast \unit = \unit \ast f = f$. Two star products $\ast_1, \ast_2$ on $(M, \{\cdot,\cdot\})$ are called equivalent if there exists a formal differential operator $ T = 1 + \nu T_1 + \ldots$ on $M$ such that $T(f \ast_1 g) = Tf \ast_2 Tg$. Star products on $M$ can be restricted (localized) to any open subset of $M$. A star product on $M$ can be extended to   $C^\infty(M)[\nu^{-1},\nu]]$, the space of formal Laurent series of functions with a finite polar part, $f = \nu^s f_s + \nu^{s+1} f_{s+1} + \ldots$, where $s$ is a possibly negative integer.

In the theory of deformation quantization there are general existence and classification results, specific constructions of star products, and explicit formulas for star products. The problem of existence and classification of star products up to equivalence on an arbitrary Poisson manifold was stated in \cite{BFFLS} and settled in \cite{K} by Kontsevich. In \cite{F1} Fedosov gave a geometric construction of star products from every equivalence class on an arbitrary symplectic manifold. 
There are star products on K\"ahler manifolds with the property of separation of variables which originate in the context of Berezin's quantization, such as the Berezin and Berezin-Toeplitz star products (see \cite{Ber}, \cite{E}, \cite{Sch}, \cite{CMP1}, \cite{BW}). Star products with separation of variables on K\"ahler manifolds are bijectively parameterized by formal K\"ahler forms.

When the existence of a specific star product is established, finding an explicit formula for that product may still be a challenging problem. For almost two decades since \cite{BFFLS}, explicit formulas had been known only for a few examples of invariant star products on homogeneous symplectic spaces such as Weyl, $pq$-, $qp$-, Wick, and anti-Wick star-products on linear symplectic spaces, and star-products on complex projective spaces and Grassmann manifolds (see \cite{BBEW}, \cite{Schirm}, and \cite{AL}). For non-invariant star products there are explicit formulas expressed in terms of directed graphs. The first such formula is the celebrated Kontsevich's formula for a star product on $\R^n$ equipped with an arbitrary Poisson structure (see \cite{K}).
There are several explicit graph-theoretic formulas for star products with separation of variables on K\"ahler manifolds. 
In \cite{RT} Reshetikhin and Takhtajan gave a graph-theoretic formula for a star product on an arbitrary K\"ahler manifold which was based upon a formal interpretation of integral formulas for Berezin's quantization. However, the star product given by their explicit formula is not normalized. Inspired by their work, Gammelgaard gave in \cite{G} an explicit formula for the star product with separation of variables with an arbitrary parameterizing formal K\"ahler form. Gammelgaard's formula specifies directly to the Berezin-Toeplitz star product owing to the explicit description of its parameterizing form from \cite{KSch}. Recently Hao Xu found in \cite{Xu1} a graph-theoretic formula for Berezin's star product and calculated its parameterizing form in \cite{Xu2}.

 It is worth mentioning that so far there are no known explicit formulas for Fedosov's star products.

Deformation quantization of endomorphism bundles was used in the proofs of the index theorem for deformation quantization and its generalizations (see \cite{F2}, \cite{NT}, \cite{Aa}, \cite{PPT}). Matrix-valued symbols and related quantizations were considered in \cite{Fan}, \cite{GE}, \cite{MM}, \cite{AE}.

Star products with separation of variables on the sections of the endomorphism bundle of a holomorphic Hermitian vector bundle on a K\"ahler manifold were constructed in \cite{NW} using Fedosov's approach. In this paper we give an alternative construction of such star products in the spirit of \cite{CMP1} and prove a generalized Gammelgaard's graph-theoretic formula for these star products. To this end we generalize the proof of Gammelgaard's formula from \cite{CMP5}. Our sign conventions differ from those in \cite{G} and \cite{CMP5}.

\section{Deformation quantizations with separation of variables}\label{S:sep}

Let $(M, \{\cdot,\cdot\})$ be a Poisson manifold endowed with a complex structure such that the Poisson tensor on $M$ is of type $(1,1)$ with respect to the complex structure. We call $M$ a K\"ahler-Poisson manifold. In the rest of the paper we denote by $m$ the complex dimension of $M$.  In local holomorphic coordinates $z^k, \bar z^l$ we write the K\"ahler-Poisson tensor as $g^{lk}$, so that
\begin{equation}\label{E:kahlpoiss}
  \{f,g\} = i g^{lk} \left(\frac{\p f}{\p z^k}\frac{\p g}{\p \bar z^l} - \frac{\p g}{\p z^k}\frac{\p f}{\p \bar z^l}\right).
\end{equation}
If the tensor $g^{lk}$ is nondegenerate, its inverse $g_{kl}$ is a pseudo-K\"ahler metric tensor on $M$.

A star product (\ref{E:star}) on a K\"ahler-Poisson manifold $M$ is called a star product with separation of variables if the bidifferential operators $C_r$ differentiate their first argument only in antiholomorphic directions and the second argument only in holomorphic ones. Equivalently, if $f$ and $g$ are local functions on $M$ and $f$ is holomorphic or $g$ is antiholomorphic, then
\begin{equation}\label{E:separ}
           f \ast g = fg.
\end{equation}
Star products with separation of variables originate in the context of Berezin's quantization (see \cite{Ber}).
It was shown in \cite{CMP1} and \cite {BW} that star products with separation of variables exist on arbitrary pseudo-K\"ahler manifolds. In the terminology of \cite{BW}, star products with separation of variables are of anti-Wick type.

It is not yet known whether star products with separation of variables exist on arbitrary K\"ahler-Poisson manifolds. Examples of such star products on K\"ahler-Poisson manifolds with degenerate K\"ahler-Poisson tensors are given in \cite{E},\cite{LTW}, and \cite{LMP}.

Deformation quantizations with separation of variables on a pseudo-K\"ahler manifold $M$ were classified in \cite{CMP1}. Denote by $\omega_{-1}$ the pseudo-K\"ahler form on $M$. Consider a formal form
\[
              \omega = \frac{1}{\nu}\omega_{-1} + \omega_0 + \nu \omega_1 + \ldots,
\]
where $\omega_r, r \geq 0,$ are possibly degenerate closed forms of type $(1,1)$ with respect to the complex structure. The star products with separation of variables on $M$ are bijectively parameterized by such formal forms. The star product with separation of variables $\ast$ parameterized by a formal form $\omega$ is completely characterized by the property that
\begin{equation}\label{E:pphik}
      \frac{\p \Phi}{\p z^k} \ast f = \frac{\p \Phi}{\p z^k} f + \frac{\p f}{\p z^k}, \ 1 \leq k \leq m,
\end{equation} 
for any local potential
\[
     \Phi = \frac{1}{\nu}\Phi_{-1} + \Phi_0 + \ldots
\]
of the form $\omega$, so that $\omega = i \p \bar \p \Phi$. Equivalently, $\ast$ is completely characterized by the property that locally
\begin{equation}\label{E:pphil}
    f \ast \frac{\p \Phi}{\p \bar z^l} = \frac{\p \Phi}{\p \bar z^l} f + \frac{\p f}{\p \bar z^l}.
\end{equation}

\section{Deformation quantization of endomorphism bundles}\label{S:oper}

Our definition of a deformation quantization of an endomorphism bundle is a generalization of the definition from \cite{Aa}.
Given  a Poisson manifold $(M, \{\cdot,\cdot\})$, let $E$ be a vector bundle of rank $d$ and $\ast_0$ be a star product on $M$. Denote by $\A$ the star algebra $(C^\infty(M)[[\nu]],\ast_0)$.

A star  product $\ast$ on $C^\infty(End(E))[[\nu]]$ associated to $\ast_0$ is an associative product given by (\ref{E:star}), where $C_r$ are bidifferential operators on $C^\infty(End(E))$ and $C_0(f,g)=fg$ for $f,g \in C^\infty(End(E))$. We require the resulting algebra of formal sections of $C^\infty(End(E))$ to be locally isomorphic to the matrix algebra $Mat_d(\A)$, where the local isomorphisms are given by formal differential operators.

First we set up terminology and notations. Given a holomorphic Hermitian vector bundle $E$ on a complex manifold $M$, we denote by $E^\ast$ and $\bar E$ the dual and the conjugate bundles, respectively, and by $C^\infty(E)$ the space of global smooth sections of $E$. The Hermitian metric on $E$ defines a global linear bijection $u: C^\infty(\bar E) \to C^\infty(E^\ast)$ whose inverse will be denoted by $\tilde u$. A global vector field $\xi$ of type $(1,0)$ on $M$ lifts to an operator on the sections of any antiholomorphic vector bundle on ~$M$. Similarly, a vector field $\bar \xi$ of type $(0,1)$ lifts to an operator on the sections of any holomorphic vector bundle. The canonical connection $\nabla$ on $E^\ast$ is defined by the formulas $\nabla_\xi = u \xi \tilde u$ and $\nabla_{\bar\xi} = \bar\xi$. The canonical connection on $End(E^\ast)$ denoted also by $\nabla$ is defined on $f \in C^\infty(End(E^\ast))$ as follows,
\[
      \nabla_\xi f = u ( \xi(\tilde u f u))\tilde u \mbox{ and } \nabla_{\bar\xi}f = \bar\xi f.
\]

Let $\ast$ be a star product with separation of variables on a K\"ahler-Poisson manifold $(M, g^{lk})$. Given a holomorphic Hermitian vector bundle $E$ of rank $d$ on $M$, we construct a deformation quantization of the endomorphism bundle of the dual bundle $E^\ast$ associated to $\ast$ and show that it has the property of separation of variables.

We choose an open cover $\{U_\alpha\}$ of $M$ such that $E$ has a holomorphic trivialization over each chart $U_\alpha$ and fix these trivializations, $E|_{U_\alpha} \cong U_\alpha \times \C^d$. The corresponding trivialization of the Hermitian metric on $E|_{U_\alpha}$  is given by an invertible matrix $u_\alpha \in Mat_d(C^\infty(U_\alpha))$ whose inverse will be denoted by $\tilde u_\alpha$. Let $a_{\alpha\beta} \in Mat_d(C^\infty(U_\alpha \cap U_\beta))$ be the holomorphic transition function, $b_{\beta\alpha} = \bar a_{\alpha\beta}$ be its complex conjugate, and $\tilde a_{\beta\alpha} = (a_{\alpha\beta})^{-1}$ and $\tilde b_{\alpha\beta} = (b_{\beta\alpha})^{-1}$ be their respective inverses, so that
\[
      u_\alpha = a_{\alpha\beta} u_\beta b_{\beta\alpha}.
\]

Let $\A = (C^\infty(M)[[\nu]],\ast)$ be the star algebra on $M$ and $\A(U)$ be its localization to an open subset $U \subset M$. We denote the product in the algebra $Mat_d(\A)$ also by $\ast$.

\begin{lemma}\label{L:uinv}
An invertible matrix in the algebra $Mat_d (C^\infty(U))$ is also invertible in the algebra $Mat_d (\A(U))$.
\end{lemma}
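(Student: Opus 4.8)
The plan is to exploit the fact that the matrix star product is a formal deformation of the ordinary (pointwise) matrix product, and to invert order by order in $\nu$ by means of a geometric series. Write $\A(U) = (C^\infty(U)[[\nu]], \ast)$; this ring is complete in the $\nu$-adic topology, the parameter $\nu$ is central, and because $C_0(f,g) = fg$ the reduction of $\ast$ modulo $\nu$ is the ordinary product. The induced product on $Mat_d(\A(U))$, given by $(A \ast B)_{ij} = \sum_k A_{ik} \ast B_{kj}$, inherits all three properties: the algebra is $\nu$-adically complete, the product preserves the $\nu$-adic filtration, and it reduces modulo $\nu$ to ordinary matrix multiplication. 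I would record these observations first, since they are what make the subsequent series manipulations legitimate.

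Next, let $A \in Mat_d(C^\infty(U))$ be invertible with pointwise inverse $A^{-1} \in Mat_d(C^\infty(U))$, and regard both as $\nu$-independent elements of $Mat_d(\A(U))$. Since the matrix star product agrees with ordinary multiplication modulo $\nu$, I can write $A \ast A^{-1} = I + \nu R$ for some $R \in Mat_d(\A(U))$, where $I$ is the identity matrix. The element $I + \nu R$ is then invertible in $Mat_d(\A(U))$: its inverse is the Neumann series $\sum_{n \geq 0} (-1)^n \nu^n R^{\ast n}$, where $R^{\ast n}$ is the $n$-fold star power and $R^{\ast 0} = I$. This series defines an element of $Mat_d(\A(U))$ because its $n$-th term lies in $\nu^n Mat_d(\A(U))$, so that the coefficient of each power of $\nu$ is a finite sum, and a telescoping computation shows it to be a two-sided $\ast$-inverse of $I + \nu R$. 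Denoting it by $S$, associativity of $\ast$ gives $A \ast (A^{-1} \ast S) = (I + \nu R) \ast S = I$, so $A^{-1} \ast S$ is a right $\ast$-inverse of $A$.

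Finally, running the symmetric argument with $A^{-1} \ast A = I + \nu R'$ produces a left $\ast$-inverse $S' \ast A^{-1}$ of $A$. As in any associative algebra, the existence of both a left and a right inverse forces them to coincide, so $A$ is invertible in $Mat_d(\A(U))$, completing the proof.

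The argument is essentially the standard principle that invertibility lifts along a formal deformation, so I do not expect a genuine obstacle. The only point demanding care is the legitimacy of the geometric series, that is, the verification that $Mat_d(\A(U))$ is $\nu$-adically complete and that the star product preserves the $\nu$-adic filtration; both are immediate from the form $f \ast g = \sum_{r \geq 0}\nu^r C_r(f,g)$ with $\C$-bilinear operators $C_r$, but they must be recorded explicitly, since they underpin every convergence claim above.
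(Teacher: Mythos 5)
Your proof is correct and is precisely the standard argument the paper alludes to when it says the well-known scalar-case proof (invert $A\ast A^{-1}=I+\nu R$ by a Neumann series in the $\nu$-adically complete algebra) "easily generalizes to the matrix case." The paper gives no further details, so your write-up simply supplies the routine verification it omits.
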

This statement  is well known in the scalar case ($d$=1). Its proof easily generalizes to the matrix case.

Denote the inverse matrix to $u_\alpha$ in the algebra $Mat_d (\A(U_\alpha))$  by $v_\alpha$.  In particular, $v_\alpha = \tilde u_\alpha \pmod{\nu}$. Using the trivialization of $E|_{U_\alpha}$, we identify $C^\infty(End(E^\ast|_{U_\alpha}))[[\nu]]$ with $Mat_d(C^\infty(U_\alpha)[[\nu]])$. Consider the mapping
\begin{equation}\label{E:astu}
    f_\alpha \mapsto \psi_\alpha = (f_\alpha u_\alpha) \ast v_\alpha
\end{equation}
from $C^\infty(End(E^\ast|_{U_\alpha}))[[\nu]]$ to $Mat_d (\A(U_\alpha))$. It is a $\nu$-linear isomorphism whose inverse is
\[
      \psi_\alpha \mapsto f_\alpha = (\psi_\alpha \ast u_\alpha) \tilde u_\alpha.
\]
{\it Remark.} We stress that in the local formula (\ref{E:astu}) $f_\alpha, u_\alpha, v_\alpha, \psi_\alpha$ are from $Mat_d(C^\infty(U_\alpha))[[\nu]]$ and the matrix product $f_\alpha u_\alpha$ is pointwise.

We transfer the product $\ast$ from the algebra $Mat_d (\A(U_\alpha))$ to the space $C^\infty(End(E^\ast|_{U_\alpha}))[[\nu]]$ via the isomorphism (\ref{E:astu}) and denote the resulting associative $\nu$-linear product by $\ast_\alpha$. The product of sections $f_\alpha,g_\alpha \in C^\infty(End(E^\ast|_{U_\alpha}))[[\nu]] \cong Mat_d(C^\infty(U)[[\nu]])$  is given by the formula
\begin{equation}\label{E:uprod}
f _\alpha \ast_\alpha g_\alpha = \left((f_\alpha u_\alpha) \ast v_\alpha \ast (g_\alpha u_\alpha)\right) \tilde u_\alpha.
\end{equation}

\begin{lemma}\label{L:constu}
If the matrix $u_\alpha \in Mat_d(C^\infty(U_\alpha))$ is constant, then $\ast_\alpha= \ast$.
\end{lemma}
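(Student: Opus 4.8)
The plan is to exploit the fact that in a normalized star product the constant functions are central and act as scalars, so that under the hypothesis every occurrence of the metric matrix $u_\alpha$ and of its inverse in the defining formula (\ref{E:uprod}) reduces to ordinary pointwise matrix multiplication. First I would record the elementary observation that if $c$ is a constant function, regarded as $c\,\unit$, then $c \ast f = cf = f \ast c$ for every $f$; this is immediate from the normalization $\unit \ast f = f \ast \unit = f$ together with the $\C[[\nu]]$-bilinearity of $\ast$. Passing to $\Mat_d$, it follows that whenever $A \in \Mat_d(C^\infty(U_\alpha))$ is a constant matrix and $B \in \Mat_d(\A(U_\alpha))$ is arbitrary, the entrywise computation $(A \ast B)_{ij} = \sum_k A_{ik} \ast B_{kj} = \sum_k A_{ik} B_{kj}$ shows $A \ast B = AB$, and likewise $B \ast A = BA$, the products on the right being the ordinary matrix products.

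Next I would determine $v_\alpha$ when $u_\alpha$ is constant. Since $u_\alpha \ast X = u_\alpha X$ for any $X$, the equation $u_\alpha \ast v_\alpha = \unit$ characterizing the $\ast$-inverse becomes the ordinary matrix equation $u_\alpha v_\alpha = \unit$, whence $v_\alpha = \tilde u_\alpha$; that is, the $\ast$-inverse of a constant invertible matrix coincides with its pointwise inverse $\tilde u_\alpha$, which is then itself constant.

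I would then substitute $v_\alpha = \tilde u_\alpha$ into (\ref{E:uprod}) and collapse the product using the observation above. Reading the triple product $(f_\alpha u_\alpha) \ast \tilde u_\alpha \ast (g_\alpha u_\alpha)$ from the left, the constant factor $\tilde u_\alpha$ cancels $u_\alpha$ to give $(f_\alpha u_\alpha)\tilde u_\alpha = f_\alpha$; in the remaining factor $f_\alpha \ast (g_\alpha u_\alpha)$ the constant $u_\alpha$ may be pulled outside the star product by bilinearity, yielding $(f_\alpha \ast g_\alpha)\, u_\alpha$. The trailing pointwise multiplication by $\tilde u_\alpha$ in (\ref{E:uprod}) then cancels this last $u_\alpha$, leaving exactly $f_\alpha \ast g_\alpha$. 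Hence $f_\alpha \ast_\alpha g_\alpha = f_\alpha \ast g_\alpha$, i.e. $\ast_\alpha = \ast$ under the identification of (\ref{E:astu}).

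The argument involves no analytic difficulty; the only point requiring care — which I regard as the conceptual crux rather than a genuine obstacle — is the bookkeeping of which products are star products and which are pointwise, together with the systematic use of the centrality of constants to move $u_\alpha$ and $\tilde u_\alpha$ in and out of $\ast$. Everything else is routine matrix algebra.
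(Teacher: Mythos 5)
Your proof is correct and rests on the same underlying observation as the paper's: by normalization and $\C[[\nu]]$-bilinearity, a constant matrix multiplies the same way pointwise and with respect to $\ast$, so $v_\alpha=\tilde u_\alpha$ and the factors of $u_\alpha$ and $\tilde u_\alpha$ cancel. The paper reaches the conclusion slightly more quickly by noting that under this observation the identification map (\ref{E:astu}) itself becomes the identity, $\psi_\alpha=(f_\alpha\ast u_\alpha)\ast v_\alpha=f_\alpha$, whereas you verify the cancellation directly in the product formula (\ref{E:uprod}); this is a difference of presentation, not of substance.
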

\begin{proof}
 If $u_\alpha$ is a constant matrix, then $f_\alpha u_\alpha = f_\alpha \ast u_\alpha$  in (\ref{E:astu})  by the normalization property of a star product. 
It follows that (\ref{E:astu})  is the identity mapping,
\[
    \psi_\alpha = (f_\alpha u_\alpha) \ast v_\alpha = (f_\alpha \ast u_\alpha) \ast v_\alpha  = f_\alpha,
\]
whence the lemma follows.
\end{proof}
\begin{lemma}\label{L:uglob}
  The local products $\ast_\alpha$ on $End(E^\ast|_{U_\alpha})[[\nu]]$ define a global star product on 
$C^\infty(End(E^\ast))[[\nu]]$.
\end{lemma}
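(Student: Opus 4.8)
The plan is to show that the locally defined products $\ast_\alpha$ agree on every overlap $U_\alpha \cap U_\beta$, so that they patch to a single product on $C^\infty(End(E^\ast))[[\nu]]$, and then to read off the defining properties of a star product on the endomorphism bundle from the construction. For the patching I would first record that a section $f$ of $End(E^\ast)$, being an endomorphism of the dual of the holomorphic bundle $E$, transforms by conjugation with the \emph{holomorphic} transition functions, $f_\alpha = a_{\alpha\beta} f_\beta \tilde a_{\beta\alpha}$, while the metric transforms as $u_\alpha = a_{\alpha\beta} u_\beta b_{\beta\alpha}$ (consistent with the identification $E^\ast \cong \bar E$ given by $u$); here $a_{\alpha\beta}$ is holomorphic and $b_{\beta\alpha} = \bar a_{\alpha\beta}$ is antiholomorphic. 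Combining these yields the pointwise identity $f_\alpha u_\alpha = a_{\alpha\beta}(f_\beta u_\beta) b_{\beta\alpha}$, since the factors $\tilde a_{\beta\alpha} a_{\alpha\beta}$ cancel.

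The decisive step is to convert these pointwise matrix products into $\ast$-products using separation of variables. Because each $C_r$ differentiates its first argument only in antiholomorphic directions and its second only in holomorphic ones, left multiplication by the holomorphic matrix $a_{\alpha\beta}$ and right multiplication by the antiholomorphic matrix $b_{\beta\alpha}$ coincide with $\ast$-multiplication; hence $f_\alpha u_\alpha = a_{\alpha\beta} \ast (f_\beta u_\beta) \ast b_{\beta\alpha}$ and, in particular, $u_\alpha = a_{\alpha\beta} \ast u_\beta \ast b_{\beta\alpha}$. The same principle shows that the pointwise inverses $\tilde a_{\beta\alpha}$ and $\tilde b_{\alpha\beta}$ are the $\ast$-inverses of $a_{\alpha\beta}$ and $b_{\beta\alpha}$, from which I would deduce $v_\alpha = \tilde b_{\alpha\beta} \ast v_\beta \ast \tilde a_{\beta\alpha}$ for the $\ast$-inverse of $u_\alpha$. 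Substituting into $\psi_\alpha = (f_\alpha u_\alpha) \ast v_\alpha$, the factors $b_{\beta\alpha} \ast \tilde b_{\alpha\beta}$ and $u_\beta \ast v_\beta$ collapse to the identity, leaving $\psi_\alpha = a_{\alpha\beta} \ast \psi_\beta \ast \tilde a_{\beta\alpha}$. Thus the two trivializing isomorphisms (\ref{E:astu}) differ by $\ast$-conjugation with $a_{\alpha\beta}$, which is an inner automorphism of the star algebra $Mat_d(\A(U_\alpha \cap U_\beta))$; since these isomorphisms intertwine $\ast_\alpha$, respectively $\ast_\beta$, with the matrix product $\ast$, it follows that $\ast_\alpha = \ast_\beta$ on the overlap, and the local products patch to a global product.

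Once the global product is in hand, I would verify the remaining requirements directly. Associativity is inherited from the matrix star algebra through the isomorphism (\ref{E:astu}). Modulo $\nu$ one has $v_\alpha = \tilde u_\alpha \pmod{\nu}$ and $\ast$ is pointwise multiplication, so (\ref{E:uprod}) gives $C_0(f,g) = fg$. Each map (\ref{E:astu}) and its inverse is a composition of pointwise multiplication by a smooth formal matrix with $\ast$-multiplication by a fixed element, hence a formal differential operator; therefore the $C_r$ are bidifferential and the construction exhibits the required local isomorphisms with $Mat_d(\A)$ given by formal differential operators.

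I expect the heart of the argument, and its only genuine subtlety, to be the second paragraph: one must apply the separation-of-variables rewriting on the correct side (the holomorphic factor as the left argument, the antiholomorphic factor as the right argument) and track that the $\ast$-inverse $v_\alpha$ transforms contravariantly, so that all of the metric factors and all of the antiholomorphic transition factors cancel and leave precisely $\ast$-conjugation by the holomorphic $a_{\alpha\beta}$.
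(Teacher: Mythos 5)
Your proposal is correct and follows essentially the same route as the paper: the same use of separation of variables to rewrite $u_\alpha = a_{\alpha\beta}\ast u_\beta\ast b_{\beta\alpha}$ and $v_\alpha = \tilde b_{\alpha\beta}\ast v_\beta\ast\tilde a_{\beta\alpha}$, leading to the compatibility of the local products on overlaps. The only (cosmetic) difference is that you package the final step as the statement that the two trivializing isomorphisms differ by the inner automorphism $\psi\mapsto a_{\alpha\beta}\ast\psi\ast\tilde a_{\beta\alpha}$, whereas the paper verifies directly that $f_\alpha\ast_\alpha g_\alpha = a_{\alpha\beta}(f_\beta\ast_\beta g_\beta)\tilde a_{\beta\alpha}$; these are equivalent, and your passing mention of ``$u_\beta\ast v_\beta$ collapsing'' is unnecessary since $(f_\beta u_\beta)\ast v_\beta$ is already $\psi_\beta$ by definition.
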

\begin{proof}
Given a section $f \in C^\infty(End(E^\ast))[[\nu]]$, consider its trivializations $f_\alpha$ and  $f_\beta$ on $U_\alpha$ and $U_\beta$, respectively. On $U_\alpha \cap U_\beta$ we have
\[
     f_\alpha = a_{\alpha\beta}f_\beta \tilde a_{\beta\alpha}.
\]
Since $a_{\alpha\beta}$ is holomorphic and $b_{\beta\alpha}$ is antiholomorphic, it follows from the separation of variables property of $\ast$ that
\[
    u_\alpha = a_{\alpha\beta} u_\beta  b_{\beta\alpha} =  a_{\alpha\beta} \ast u_\beta \ast  b_{\beta\alpha}.
\]
The pointwise inverses $\tilde a_{\beta\alpha}$ and $\tilde b_{\alpha\beta}$ of $a_{\alpha\beta}$ and $b_{\beta\alpha}$, respectively, are their inverses in the algebra $Mat_d(\A(U_\alpha\cap U_\beta))$ as well. Therefore,
\[
        v_\alpha = \tilde b_{\alpha\beta} \ast v_\beta \ast \tilde a_{\beta\alpha}.
\]
Given global sections $f,g \in C^\infty(End(E^\ast))[[\nu]]$, we have
\begin{eqnarray*}
f_\alpha \ast_\alpha g_\alpha = ((f_\alpha u_\alpha) \ast v_\alpha \ast (g_\alpha u_\alpha))\tilde u_\alpha =\\
((a_{\alpha\beta} f_\beta u_\beta b_{\beta\alpha}) \ast (\tilde b_{\alpha\beta} \ast v_\beta \ast \tilde a_{\beta\alpha}) \ast (a_{\alpha\beta} g_\beta u_\beta b_{\beta\alpha}))(\tilde b_{\alpha\beta} \tilde u_\beta \tilde a_{\beta\alpha})=\\
((a_{\alpha\beta}\ast (f_\beta u_\beta) \ast b_{\beta\alpha}) \ast (\tilde b_{\alpha\beta} \ast v_\beta \ast \tilde a_{\beta\alpha}) \ast (a_{\alpha\beta} \ast (g_\beta u_\beta) \ast b_{\beta\alpha}))\\
(\tilde b_{\alpha\beta} \tilde u_\beta \tilde a_{\beta\alpha})= (a_{\alpha\beta}\ast ((f_\beta u_\beta) \ast v_\beta \ast (g_\beta u_\beta)) \ast b_{\beta\alpha})(\tilde b_{\alpha\beta} \tilde u_\beta \tilde a_{\beta\alpha})=\\
 (a_{\alpha\beta} ((f_\beta u_\beta) \ast v_\beta \ast (g_\beta u_\beta)) b_{\beta\alpha})(\tilde b_{\alpha\beta} \tilde u_\beta \tilde a_{\beta\alpha})=\\
 a_{\alpha\beta} (((f_\beta u_\beta) \ast v_\beta \ast (g_\beta u_\beta)) \tilde u_\beta) \tilde a_{\beta\alpha}=
a_{\alpha\beta} (f_\beta \ast_\beta g_\beta )\tilde a_{\beta\alpha}.
\end{eqnarray*}
It follows that the local sections $f_\alpha \ast_\alpha g_\alpha$ are glued together and yield a global section from  $C^\infty(End(E^\ast))[[\nu]]$ which does not depend on the choice of the cover $\{U_\alpha\}$ and of the trivializations of $E|_{U_\alpha}$. By construction, the resulting global product on $C^\infty(End(E^\ast))[[\nu]]$  is locally isomorphic to the product in the algebra $Mat_d(\A)$.
\end{proof}

We will denote by $\ast_u$ the global star-product with separation of variables on the sections of $End(E^\ast)$ constructed above. 

We call a local section $f$ of $End(E^\ast)$ {\it antiholomorphic} if the section $\tilde ufu$ of the antiholomorphic bundle $End(\bar E)$ is antiholomorphic. Thus, a local section $f$ of $End(E^\ast)$ is holomorphic if $\nabla_{\bar\xi}f=0$ for every vector field $\bar\xi$ of type $(0,1)$ and antiholomorphic if $\nabla_\xi f=0$ for every vector field $\xi$ of type $(1,0)$, respectively.

\begin{lemma}\label{L:usep}
Given local sections $f,g$ of $End(E^\ast)$, if $f$ is holomorphic or $g$ is antiholomorphic, then $f \ast_u g = fg$, where $fg$ is the pointwise composition of $f$ and $g$.
\end{lemma}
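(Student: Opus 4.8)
The plan is to reduce the claim to the local formula (\ref{E:uprod}) together with the scalar separation-of-variables property (\ref{E:separ}), lifted to matrices. Since both the pointwise composition $fg$ and the product $f \ast_u g$ are global sections of $End(E^\ast)$ (the latter by Lemma \ref{L:uglob}), it suffices to verify the identity in a single chart $U_\alpha$, that is, to show $f_\alpha \ast_\alpha g_\alpha = f_\alpha g_\alpha$ for the local matrix trivializations. First I would restate the hypotheses in trivialized form. Unwinding the definition of the canonical connection, ``$f$ holomorphic'' means $\nabla_{\bar\xi} f = \bar\xi f = 0$, so that $f_\alpha$ is a matrix with holomorphic entries; and ``$g$ antiholomorphic'' means $\nabla_\xi g = 0$, which by the definition preceding the lemma is equivalent to $h_\alpha := \tilde u_\alpha g_\alpha u_\alpha$ being a matrix with antiholomorphic entries.

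The key auxiliary observation I would record is the matrix version of (\ref{E:separ}). Entrywise the scalar star product satisfies (\ref{E:separ}), and matrix multiplication does not mix entries; hence for a matrix $A$ with holomorphic entries one has $A \ast B = AB$ for any $B$, and for a matrix $C$ with antiholomorphic entries one has $B \ast C = BC$ for any $B$, the right-hand sides being pointwise matrix products. In particular $f_\alpha u_\alpha = f_\alpha \ast u_\alpha$ in the holomorphic case, and $u_\alpha h_\alpha = u_\alpha \ast h_\alpha$ in the antiholomorphic case. I would also use that $v_\alpha$ is the two-sided $\ast$-inverse of $u_\alpha$ in $Mat_d(\A(U_\alpha))$ and that $\tilde u_\alpha$ is its pointwise inverse, so $u_\alpha \ast v_\alpha = v_\alpha \ast u_\alpha$ is the identity matrix and $u_\alpha \tilde u_\alpha = \tilde u_\alpha u_\alpha$ is the identity matrix.

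For the two cases the computation is then a short chain. In the holomorphic case, starting from (\ref{E:uprod}) I would replace $f_\alpha u_\alpha$ by $f_\alpha \ast u_\alpha$, use associativity of $\ast$ together with $u_\alpha \ast v_\alpha = \unit$ to collapse the two middle factors, apply (\ref{E:separ}) once more to turn $f_\alpha \ast (g_\alpha u_\alpha)$ into the pointwise product, and finish with $u_\alpha \tilde u_\alpha = \unit$, yielding $f_\alpha \ast_\alpha g_\alpha = f_\alpha g_\alpha$. In the antiholomorphic case I would use the identity $g_\alpha u_\alpha = u_\alpha h_\alpha = u_\alpha \ast h_\alpha$, then associativity together with $v_\alpha \ast u_\alpha = \unit$ to collapse, apply (\ref{E:separ}) to move $h_\alpha$ out of the star product as $(f_\alpha u_\alpha)\ast h_\alpha = (f_\alpha u_\alpha) h_\alpha$, and finally reassemble $f_\alpha u_\alpha h_\alpha \tilde u_\alpha = f_\alpha(u_\alpha h_\alpha \tilde u_\alpha) = f_\alpha g_\alpha$.

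I expect the only genuinely delicate point to be the transition from the scalar property (\ref{E:separ}) to its matrix form, and the careful bookkeeping of which one-sided inverse ($u_\alpha \ast v_\alpha$ versus $v_\alpha \ast u_\alpha$) is the one available at each step. Once the matrix separation-of-variables statement is established and the correct side of the inverse is matched to each case, the remaining manipulation is purely mechanical.
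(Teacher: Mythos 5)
Your proposal is correct and follows essentially the same route as the paper: both cases are handled in a fixed holomorphic trivialization by rewriting the pointwise products $f_\alpha u_\alpha$ and $u_\alpha(\tilde u_\alpha g_\alpha u_\alpha)$ as star products via the (entrywise) separation-of-variables property, collapsing $u_\alpha \ast v_\alpha$ or $v_\alpha \ast u_\alpha$, and finishing with the pointwise inverse $\tilde u_\alpha$. Your explicit remarks on the matrix form of (\ref{E:separ}) and on which one-sided inverse is needed in each case are exactly the points the paper uses implicitly.
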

\begin{proof}
Fix a holomorphic trivialization of $E|_U$ over an open subset $U \subset M$. Let $f$ be a holomorphic section of $End(E^\ast|_U)$ identified with a matrix from $Mat_d(C^\infty(U))$ with holomorphic entries. It follows from (\ref{E:uprod}) and  (\ref{E:separ}) that
\[
   f \ast_u g =  \left( (f \ast u) \ast v \ast (g u)  \right)\tilde u =
 \left( f \ast (g u)  \right)\tilde u = \left( f  g u  \right) \tilde u = fg.
\]
Now let $g$ be an antiholomorphic section of $End(E^\ast|_U)$ identified  with a matrix from $Mat_d(C^\infty(U))$. Then $\tilde u g u$ is a matrix with antiholomorphic entries. We have  from (\ref{E:uprod}) and  (\ref{E:separ}) that
\begin{eqnarray*}
     f \ast_u g =  \left( (f u) \ast v \ast (u (\tilde u g u))  \right)\tilde u = \left( (f u) \ast v \ast (u \ast (\tilde u g u))  \right)\tilde u = \\ \left( (f u) \ast (\tilde u g u)  \right)\tilde u = \left( (f u)(\tilde u g u)  \right)\tilde u =  fg.
\end{eqnarray*}

\end{proof}
Lemma \ref{L:usep} means that the product $\ast_u$ has the property of separation of variables.

\section{Tensor notations}

Let $(E,u)$ be a holomorphic Hermitian vector bundle of rank $d$ on a K\"ahler-Poisson manifold $M$ of complex dimension~$m$. We fix a coordinate chart $U \subset M$ and a holomorphic trivialization of $E|_U$.  Let $u \in Mat_d(C^\infty(U))$ be the corresponding trivialization of the Hermitian metric on $E|_U$ and $\tilde u$ be its pointwise inverse. 
The Christoffel symbol of the canonical connection $\nabla$ on $E$ corresponding to a holomorphic index $k$ is
\[
       \Gamma_k = \frac{\p u}{\p z^k}\tilde u.           
\]
The Christoffel symbol of $\nabla$ corresponding to an antiholomorphic index is equal to zero.
Given a section $s$ of $C^\infty(E^\ast|_U)$, its covariant derivatives are defined as follows,
\begin{equation}\label{E:nablae}
    \nabla_k s = u \frac{\p}{\p z^k}\left(\tilde u s\right) = \frac{\p s}{\p z^k} - \Gamma_k s \mbox{ and } \nabla_{\bar l}s = \frac{\p s}{\p \bar z^l}.
\end{equation}
For a section $f$ of $End(E^\ast|_U)$,
\[
      \nabla_k f = u \left( \frac{\p}{\p z^k} (\tilde u f u)\right) \tilde u =
 \frac{\p f}{\p z^k} + [f, \Gamma_k]  \mbox{ and } \nabla_{\bar l}f = \frac{\p f}{\p \bar z^l}.
\]
The operators $\nabla_k, 1 \leq k \leq m,$ on the local sections of $E, E^\ast$, and $End(E^\ast)$  pairwise commute for all holomorphic indices and the operators $\nabla_{\bar l}, 1 \leq \bar l \leq m,$ pairwise commute for the antiholomorphic ones. Given a holomorphic tensor index $K = k_1 \ldots k_p$, we set
\[
              \p_K = \frac{\p}{\p z^{k_1}} \ldots \frac{\p}{\p z^{k_p}} \mbox{ and }\nabla_K = \nabla_{k_1} \ldots \nabla_{k_p}.
\]
Similarly we define the operators $\p_{\bar L}$ and  $\nabla_{\bar L}$ for an antiholomorphic tensor index $\bar L = \bar l_1 \ldots \bar l_q$. We will omit the bar in the notation for antiholomorphic indices if it does not lead to a confusion.

For a section $f$ of $End(E^\ast|_U)$,
\begin{equation}\label{E:nablap}
         \nabla_K f =  u \left( \p_K (\tilde u f u)\right) \tilde u \mbox{ and } \nabla_{\bar L}f = \p_{\bar L}f.
\end{equation}

We will work with indexed arrays of $d \times d$-matrices defined at a point of a coordinate chart $U \subset M$, where the indices range from 1 to $m$. These arrays will be referred to simply as (matrix-valued) tensors.
We do not assume that such tensors determine coordinate-invariant geometric objects on ~$U$.

For a tensor index $I = i_1 \ldots i_n$ we set $|I| = n$. Given tensors $f_I$ and $g^I$, for each fixed tensor index $I = i_1 \ldots i_n$ the elements $ f_{i_1 \ldots i_n}$ and $g^{i_1 \ldots i_n}$ are $d \times d$-matrices.
We define the contraction of $f_I$ and $g^I$ by the formulas
\[
                       f_I g^I = \sum_{n=0}^\infty f_{i_1 \ldots i_n} g^{i_1 \ldots i_n} \mbox{ and }  g^I f_I = \sum_{n=0}^\infty g^{i_1 \ldots i_n} f_{i_1 \ldots i_n},
\]
where the summands are matrix products. In particular, the contraction depends on the order of tensor factors. We assume that in these sums only finitely many summands are nonzero.

We introduce a (scalar-valued) tensor $\Delta_K^I$ separately symmetric in $I$ and $K$ such that $\Delta_K^I =0$ if $|I| \neq |K|$ and 
\[
    \Delta_{k_1 \ldots k_n}^{i_1 \ldots i_n} = \frac{1}{n!}\sum_{\sigma \in S_n} \delta_{k_{\sigma(1)}}^{i_1} \ldots \delta_{k_{\sigma(n)}}^{i_n},
\] 
where  $S_n$ is the symmetric group. Given a tensor $f_K$ symmetric in the tensor index $K$, we have
\[
     \Delta_K^I f_I = f_I \Delta_K^I = f_K.
\]
 
Assume that $\ast$ is a star product with separation of variables on $M$ and denote by $\ast_u$ the associated star product with separation of variables on $C^\infty(End(E^\ast))[[\nu]]$.

\begin{lemma}\label{L:tensform}
  The star product $\ast_u$ can be written in local coordinates as follows,
\[
    f \ast_u g =  \left(\nabla_{\bar L} f\right) C^{\bar LK} \left(\nabla_K g\right),
\]
where the (matrix-valued) tensor $C^{\bar LK}$ is separately symmetric in the tensor indices $K$ and $\bar L$. 
\end{lemma}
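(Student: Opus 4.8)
The plan is to deduce the formula from the separation-of-variables property of $\ast_u$ (Lemma \ref{L:usep}) together with associativity, and only at the end to read off the covariant derivatives by means of the identities $\nabla_{\bar L}f=\p_{\bar L}f$ and $\nabla_K g=u\left(\p_K(\tilde u g u)\right)\tilde u$ from (\ref{E:nablap}). First I would treat the dependence on $f$. A holomorphic section is characterized locally by $\p_{\bar l}f=0$, and Lemma \ref{L:usep} gives $f\ast_u g=fg$ in that case, so every $C_r$ differentiates $f$ only antiholomorphically; thus $\p_{\bar L}f=\nabla_{\bar L}f$ already carries all derivatives of $f$. To fix the matrix placement I would use that $h\ast_u f=hf$ for holomorphic $h$, whence associativity yields $(hf)\ast_u g=h(f\ast_u g)$ for all holomorphic $h$; taking $h$ to range over constant matrices and invoking the elementary fact that a linear map $X\mapsto\sum_j P_jXQ_j$ intertwining left multiplication must itself be a right multiplication, one sees that no matrix factor can survive to the left of the $f$-derivatives. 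This produces
\[
    f\ast_u g=(\nabla_{\bar L}f)\,N_{\bar L}(g),
\]
where each $N_{\bar L}$ is a matrix-valued differential operator in $g$ alone.

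The dependence on $g$ is the harder half, and this is where I expect the main obstacle: the canonical connection is asymmetric --- its Christoffel symbol vanishes in antiholomorphic but not in holomorphic directions --- so constant matrices are holomorphic but not antiholomorphic sections, and the argument above does not mirror directly. Applying Lemma \ref{L:usep} to an antiholomorphic $g$ (characterized by $\nabla_k g=0$ for all $k$) forces $N_{\bar L}(g)=0$ for $|\bar L|\ge 1$, while associativity with right multiplication by an antiholomorphic section $h$ gives $f\ast_u(gh)=(f\ast_u g)h$ and hence $N_{\bar L}(gh)=N_{\bar L}(g)h$. The crucial structural input is the covariant Leibniz rule: for antiholomorphic $h$ one has $\nabla_{\bar M}h\neq 0$ in general but $\nabla_K h=0$ for every holomorphic index with $|K|\ge 1$, so $\nabla_K(gh)=(\nabla_K g)h$ with no derivative falling on $h$. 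Expressing a general differential operator through ordered iterated covariant derivatives $\nabla_K\nabla_{\bar M}g$ (which coordinatize the jets, reorderings being absorbed into curvature and lower-order terms), the requirement that $N_{\bar L}(gh)$ contain no derivative of $h$ rules out any factor $\nabla_{\bar M}g$ with $\bar M\neq\emptyset$; hence $N_{\bar L}$ is built from the holomorphic covariant derivatives $\nabla_K$ alone. Since an antiholomorphic section can attain an arbitrary matrix value at a prescribed point, matching values there removes any matrix factor to the right of $\nabla_K g$, giving $N_{\bar L}(g)=\sum_K P^K_{\bar L}(\nabla_K g)$ with coefficients $P^K_{\bar L}$ standing to the left.

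Combining the two halves, $f\ast_u g$ depends on $f$ only through $\{\nabla_{\bar L}f\}$ and on $g$ only through $\{\nabla_K g\}$, is bilinear, and carries no matrix factor to the left of $\nabla_{\bar L}f$ nor to the right of $\nabla_K g$. The fact that a bilinear map $\Phi$ on $\Mat_d(\C)$ with $\Phi(hX,Y)=h\Phi(X,Y)$ and $\Phi(X,Yh)=\Phi(X,Y)h$ must equal $\Phi(X,Y)=XCY$ for $C=\Phi(\unit,\unit)$ then collapses the two one-sided forms into a single middle tensor, yielding
\[
    f\ast_u g=(\nabla_{\bar L}f)\,C^{\bar LK}\,(\nabla_K g)
\]
with matrix-valued coefficients $C^{\bar LK}$. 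Finally, because the operators $\nabla_{\bar l}$ commute among themselves and the $\nabla_k$ commute among themselves, both $\nabla_{\bar L}f$ and $\nabla_K g$ are symmetric in their indices, so only the part of $C^{\bar LK}$ symmetric separately in $K$ and in $\bar L$ contributes; replacing $C^{\bar LK}$ by $\Delta^{\bar L}_{\bar L'}\Delta^{K}_{K'}C^{\bar L'K'}$ leaves the contraction unchanged and renders $C^{\bar LK}$ separately symmetric, as required.
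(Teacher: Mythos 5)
Your proposal reaches the right statement by a genuinely different route. The paper's proof is a short direct computation: it substitutes the defining local formula $f\ast_u g=((fu)\ast v\ast(u(\tilde u g u)))\tilde u$ from (\ref{E:uprod}), uses the separation-of-variables property of $\ast$ on $\Mat_d(\A(U))$ to write the triple $\ast$-product in the form $(\p_{\bar L}(fu))\,(\cdots)\,(\p_K(u(\tilde u g u)))$, and then applies the Leibniz rule together with the identities $\p_{\bar L}f=\nabla_{\bar L}f$ and $\p_K(\tilde u gu)=\tilde u(\nabla_K g)u$ from (\ref{E:nablap}) to land directly on $(\nabla_{\bar L}f)A^{\bar LK}\tilde u(\nabla_K g)$, whence $C^{\bar LK}=A^{\bar LK}\tilde u$. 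You instead derive the shape of the bidifferential operators axiomatically from Lemma \ref{L:usep} and associativity, never touching (\ref{E:uprod}). That is a legitimate and more intrinsic argument (it would apply to any associative deformation with the separation property), at the cost of several jet-independence verifications that the paper's two-line computation avoids entirely.

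However, one step does not follow as written. From ``$f\ast_u g=fg$ for holomorphic $f$'' you conclude that ``every $C_r$ differentiates $f$ only antiholomorphically.'' For a single bidifferential operator this inference is false: a term such as $(\p_k\p_{\bar l}f)\cdot(\p^\beta g)$ vanishes on every holomorphic $f$ yet differentiates $f$ holomorphically. The correct tool is the associativity identity $(hf)\ast_u g=h(f\ast_u g)$, which you do derive for all holomorphic $h$ --- but you then apply it only to constant matrices, which controls the matrix placement and not the derivative structure. Taking $h=z^k\unit$ shows that $C_r(\cdot,g)$ commutes with multiplication by holomorphic coordinates in its first slot, and only this kills every holomorphic derivative of $f$. (Your treatment of the $g$-slot, via $N_{\bar L}(gh)=N_{\bar L}(g)h$ for antiholomorphic $h$, follows the right pattern and does not suffer from this problem, though eliminating the mixed covariant derivatives there still requires the downward induction on the antiholomorphic order that your parenthetical remark only gestures at.) With that repair the argument closes.
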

\begin{proof}
 Using formulas (\ref{E:uprod}) and (\ref{E:nablap}) and the separation of variables property of the product $\ast$ we get
\begin{eqnarray}\label{E:tensa}
   f \ast_u g = ((fu) \ast v \ast (u (\tilde u gu)))\tilde u =\\
 \left(\p_{\bar L}f\right) A^{\bar L K} \left(\p_K (\tilde u g u)\right) \tilde u =
\left(\nabla_{\bar L} f \right) A^{\bar L K} \tilde u \left(\nabla_K g\right) \nonumber
\end{eqnarray}
for some tensor $A^{\bar L K}$. To conclude the proof we take $C^{\bar L K} = A^{\bar L K}\tilde u$.
\end{proof}

{\it Example.} Assume that $\ast$ is the anti-Wick star product on $\C^m$, the vector bundle $E$ on $\C^m$ is trivial, and the matrix $u$ is constant. Then $\nabla_k = \p/\p z^k, \ \nabla_{\bar l} = \p/\p \bar z^l$, and, by Lemma \ref{L:constu},
\begin{equation}\label{E:awick}
    f \ast_u g = f \ast g = \sum_{r=0}^\infty \frac{\nu^r}{r!} g^{\bar l_1 k_1}\ldots g^{\bar l_r k_r}\frac{\p^r f}{\p \bar z^{l_1} \ldots \p \bar z^{l_r}}\frac{\p^r g}{\p z^{k_1} \ldots \p z^{k_r}}.
\end{equation}
The product $\ast_u$ is thus given by the scalar-valued tensor $C^{\bar L K}$ such that $C^{\bar LK}=0$ if $|K| \neq |\bar L|$ and
\[
      C^{\bar l_1 \ldots \bar l_r k_1 \ldots k_r} = \frac{\nu^r}{(r!)^2}\sum_{\sigma \in S_r} g^{\bar l_1 k_{\sigma(1)}} \ldots g^{\bar l_r k_{\sigma(r)}}.
\]

Recall that, as introduced in \cite{GR}, a star product (\ref{E:star}) is called natural if the bidifferential operator $C_r$ is of order not greater than $r$ in each argument. This notion immediately extends to star products on the sections of endomorphism bundles.
 It was proved in \cite{CMP3} that any star product with separation of variables $\ast$ on a K\"ahler-Poisson manifold $M$ is natural, which implies that the product $\ast$ in the matrix algebra $Mat_d(\A)$ is natural. We want to show that the product $\ast_u$ is also natural.

We write the tensor $C^{LK}$ as a series in $\nu$,
\[
    C^{LK} = \sum_{r \geq 0} \nu^r C_r^{LK}.
\]
\begin{proposition}\label{P:nat}
  If the component $C_r^{LK}$ is nonzero, then $|L| \leq r$ and $|K| \leq r$.
\end{proposition}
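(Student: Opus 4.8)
The plan is to deduce the bound from the naturalness of the underlying star product, which holds for the matrix algebra $Mat_d(\A)$ as recalled above. I would argue in two stages: first that $\ast_u$ is itself natural when written in ordinary partial derivatives, and then that naturalness of $\ast_u$ translates into the stated bound on the components $C_r^{\bar LK}$ by a principal-symbol argument.

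For the first stage, I would start from the local formula $f \ast_u g = ((fu) \ast v \ast (gu))\tilde u$ of (\ref{E:uprod}) and set $F = fu$, $G = gu$, pointwise products with the $\nu$-independent smooth invertible matrix $u$. Writing $v = \sum_s \nu^s v_s$ and expanding the natural matrix product $\ast$ as $P \ast Q = \sum_a \nu^a \tilde C_a(P,Q)$ with each $\tilde C_a$ of differential order at most $a$ in each argument, associativity gives $F \ast v \ast G = \sum_{a,b,s} \nu^{a+b+s}\, \tilde C_a(F, \tilde C_b(v_s, G))$. The key point is that $v$ is not one of the two arguments $f,g$: its derivatives are absorbed into coefficients and its $\nu$-powers only raise the total order. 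Collecting the coefficient of $\nu^r$, so that $a+b+s = r$, one sees that $F$ is differentiated at most $a \le r$ times and $G$ at most $a+b \le r$ times. Since $u$ and $\tilde u$ are $\nu$-independent and of differential order zero, the orders in $f$ and $g$ agree with those in $F$ and $G$; hence the coefficient $C_r(f,g)$ of $\nu^r$ in $f \ast_u g$ is a bidifferential operator of order at most $r$ in each argument, that is, $\ast_u$ is natural.

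For the second stage, I would compare the two descriptions of $C_r(f,g)$: on one hand a bidifferential operator of order at most $r$ in each argument, on the other hand $(\nabla_{\bar L} f)\, C_r^{\bar LK}\, (\nabla_K g)$ coming from Lemma \ref{L:tensform}. Here $\nabla_{\bar L} f = \p_{\bar L} f$ exactly, while the principal symbol of the operator $\nabla_K$ is $\p_K$ by (\ref{E:nablap}). Fixing a point $x_0$, let $p$ be the maximal $|L|$ with $C_r^{\bar LK}(x_0) \ne 0$; the part of $C_r(f,g)$ of order exactly $p$ in $f$ is $\sum_{|L| = p} (\p_{\bar L} f)\, C_r^{\bar LK}\, (\nabla_K g)$. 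Choosing test sections $f,g$ with prescribed high-order jets at $x_0$, which is possible independently for the symmetrized holomorphic and antiholomorphic indices because of the separate symmetry of $C_r^{\bar LK}$ in $L$ and $K$, makes this principal part a nonzero operator, so naturalness forces $p \le r$, i.e. $|L| \le r$. The symmetric argument applied to the order in $g$, using that the principal symbol of $\nabla_K$ is $\p_K$, gives $|K| \le r$.

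The main obstacle I anticipate is the clean extraction of the principal symbols in the second stage: because the coefficients $C_r^{\bar LK}$ are matrices inserted between the matrix derivatives $\p_{\bar L} f$ and $\nabla_K g$, one must rule out cancellations and verify that a single pair $(L,K)$ can be isolated by a suitable choice of jets at $x_0$. Making this independence-of-jets argument precise, while respecting the symmetrization encoded in $\Delta_K^I$, is the delicate step; the $\nu$-degree count of the first stage is routine once associativity is used to keep the non-argument factor $v$ out of the order count.
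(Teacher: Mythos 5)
Your argument is correct and follows essentially the same route as the paper: both rely on the naturalness of $\ast$ on $Mat_d(\A)$ and on associativity to absorb the $\nu$-positive factor $v$ into one argument, then read off the bound on $|L|$ and $|K|$ from the order of differentiation at each power of $\nu$. The only difference is cosmetic — you bound both orders in a single three-factor expansion and spell out the jet/principal-symbol step that the paper leaves implicit.
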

\begin{proof}
  Assume that in formula~ (\ref{E:tensa}) the matrix-valued functions $f$ and $g$ do not depend on $\nu$ and set $h = h_0 + \nu h_1 + \ldots := v \ast (gu)$. Then
\begin{equation}\label{E:nat}
     ((fu) \ast h)\tilde u = (\nabla_L f) C^{LK} (\nabla_K g). 
\end{equation}
The product $\ast$ in $\Mat_d(\A)$ can be given by formula (\ref{E:star}), where the operators $C_r$ are obtained from the corresponding operators for the product $\ast$ on scalar-valued functions. Equating the coefficients at $\nu^r$ on both sides of (\ref{E:nat}), we get
\begin{equation}\label{E:r}
      \sum_{i+j = r}C_i ((fu), (h_j)) \tilde u =(\nabla_L f) C_r^{LK} (\nabla_K g).
\end{equation}
Since the star product $\ast$ is natural, the order of differentiation of $f$ on the left-hand side of (\ref{E:r}) does not exceed $r$. Therefore, $C_r^{LK}=0$ provided $|L| >r$. It can be proved similarly that $C_r^{LK}=0$ provided $|K| >r$.
\end{proof}
Proposition \ref{P:nat} means that the star product $\ast_u$ is natural.

\section{Formal Calabi functions}\label{S:cal}

In this section we define formal Calabi functions related to a closed (1,1)-form and to a holomorphic Hermitian vector bundle on a complex manifold.

Let $M$ be a complex manifold and $U \subset M$ be a holomorphic coordinate chart. We use the following notations for a smooth function $f(z, \bar z)$ on $U$ and holomorphic formal parameters $\eta^k$ corresponding to the holomorphic coordinates $z^k$,
\[
   f(z+ \eta, \bar z) = \exp\left\{\eta^k \frac{\p}{\p z^k}\right\}f = \sum_K \frac{1}{|K|!} \eta^K \p_K f.
\]
We use similar notations for antiholomorphic formal parameters $\bar\eta^l$ corresponding to $\bar z^l$.

Let $\varkappa$ be a closed (1,1)-form on $M$ and $\Psi$ be a potential of $\varkappa$ on $U$ so that $\varkappa = i \p \bar\p \Psi$ (the potential $\Psi$ exists if $U$ is contractible). We define the formal Calabi function for the form~ $\varkappa$ on $U$,
\[
    D_\varkappa(\eta,\bar\eta) := \Psi(z, \bar z) - \Psi(z+\eta, \bar z) + \Psi(z+\eta,\bar z + \bar \eta) - \Psi(z, \bar z + \bar \eta),
\]
as an element of $(C^\infty(U))[[\eta, \bar\eta]]$. The function $ D_\varkappa(\eta,\bar\eta)$ lies in the ideal generated by the products $\eta^k\bar\eta^l$. We write formally $D_\varkappa(0, \bar\eta)=0$ and $D_\varkappa(\eta,0)=0$. In particular,  $\exp\{D_\varkappa\}$ is a well defined element of $(C^\infty(U))[[\eta, \bar\eta]]$. The function $D_\varkappa(\eta,\bar \eta)$ does not depend on the choice of the potential $\Psi$ of the form $\varkappa$.

Now assume that $E$ is a holomorphic Hermitian vector bundle on $M$  and $u \in Mat_d(C^\infty(U))$ is the Hermitian fiber metric for a fixed trivialization $E|_U \cong U \times \C^d$. We introduce a matrix-valued function
\[
    Q_u(\eta,\bar\eta) := u(z, \bar z)\tilde u(z+\eta, \bar z) u(z+\eta,\bar z+\bar \eta)\tilde u(z, \bar z + \bar \eta)
\]
which we interpret as an operator on $C^\infty(E^\ast|_U)[[\eta,\bar\eta]]$. Since $Q_u$ is the identity operator modulo $\eta, \bar\eta$, the operator
\[
   H_u := \log Q_u
\]
is well defined. We refer to $H_u$ as to the formal Calabi function for $E$. If $E$ is a line bundle, then $H_u = D_\varkappa$ for $\varkappa = i \p \bar \p \log u$, i.e., $H_u$ is the formal Calabi function for the curvature of the canonical connection on $E$.

We introduce the following operators on $C^\infty(E^\ast|_U)[[\eta,\bar\eta]]$,
\[
        x := \eta^k \nabla_k \mbox{ and } y := \bar\eta^l \nabla_{\bar l},
\]
where $\nabla_k$ and $\nabla_{\bar l}$ are as in (\ref{E:nablae}). In particular,
\[
      e^x = u \left(e^{\eta^k \frac{\p}{\p z^k}}\right) \tilde u \mbox{ and } e^y = e^{\bar\eta^l \frac{\p}{\p \bar z^l}}.
\]

To simplify the notations we will drop the subscript $u$ in $Q_u$ and $H_u$.
\begin{lemma}\label{L:commut}
The following formula holds,
\[
     Q = e^x e^y e^{-x} e^{-y}.
\]
\end{lemma}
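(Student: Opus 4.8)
~The plan is to unfold both sides of the asserted identity as operators on $C^\infty(E^\ast|_U)[[\eta,\bar\eta]]$ and compare them directly. The operator $Q$ is defined as multiplication by the matrix-valued function
\[
   Q(\eta,\bar\eta) = u(z,\bar z)\,\tilde u(z+\eta,\bar z)\,u(z+\eta,\bar z+\bar\eta)\,\tilde u(z,\bar z+\bar\eta),
\]
so the task reduces to showing that the composite $e^x e^y e^{-x} e^{-y}$, applied to an arbitrary section, acts as pointwise multiplication by exactly this function. The key is the explicit description of the exponentials recorded just before the statement: $e^y = \exp\{\bar\eta^l \p/\p\bar z^l\}$ is the antiholomorphic shift $s \mapsto s(z,\bar z+\bar\eta)$, while $e^x = u\,\exp\{\eta^k\p/\p z^k\}\,\tilde u$ is the holomorphic shift conjugated by the fiber metric, $s \mapsto u(z,\bar z)\,(\tilde u s)(z+\eta,\bar z)$.

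First I would compute the four-fold composition on a section $s$ by applying the factors from right to left, tracking how each shift and each conjugating factor of $u$ or $\tilde u$ acts on the coordinate arguments. Applying $e^{-y}$ replaces $\bar z$ by $\bar z - \bar\eta$; applying $e^{-x}$ then shifts $z \mapsto z-\eta$ while conjugating by $u(z,\bar z)\cdots\tilde u(z+\eta,\cdots)$ wait—more carefully, $e^{-x}$ sends $t \mapsto u(z,\bar z)\,(\tilde u\, t)(z-\eta,\bar z)$. The point is that the holomorphic shifts act only on the $z$-argument and the antiholomorphic shifts act only on the $\bar z$-argument, so they commute as shift operators; what does \emph{not} commute is the interleaving of the metric factors $u,\tilde u$, and this is precisely what produces the telescoping product defining $Q$. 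Carrying out the bookkeeping, the inner shifts cancel against the outer shifts on the argument of $s$ itself, leaving $s(z,\bar z)$ untouched, while the surviving boundary factors of $u$ and $\tilde u$ accumulate at the four corners $(z,\bar z)$, $(z+\eta,\bar z)$, $(z+\eta,\bar z+\bar\eta)$, $(z,\bar z+\bar\eta)$ to reconstruct $Q(\eta,\bar\eta)$.

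The main obstacle, and the step deserving the most care, is the correct ordering and argument-tracking of the noncommuting matrix factors: each $e^{\pm x}$ carries a conjugation $u(\cdots)\,\tilde u(\cdots)$ whose two factors are evaluated at \emph{different} shifted points, and one must verify that composing $e^x e^y e^{-x} e^{-y}$ makes the internal $u\tilde u$ pairs collapse to identity at the shifted arguments while the four outermost factors survive in exactly the order $u\,\tilde u\,u\,\tilde u$ with the matching corner-point arguments. I would organize this by writing out the action on a test section symbolically, say
\[
   e^x e^y e^{-x} e^{-y} s
   = u(z,\bar z)\,\big[\tilde u\,e^{\eta\p_z}\,u\,e^{\bar\eta\p_{\bar z}}\,\tilde u\,e^{-\eta\p_z}\,u\,e^{-\bar\eta\p_{\bar z}}\,\tilde u\big](z,\bar z)\cdot s,
\]
and then evaluating the bracketed scalar shift-operator product: the shift operators $e^{\pm\eta\p_z}$ and $e^{\pm\bar\eta\p_{\bar z}}$ commute among themselves, so I can move them past each other and onto the adjacent $u,\tilde u$ factors, which fixes the evaluation point of each metric factor and cancels the shift on $s$. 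Since the result is plain multiplication by a matrix-valued function, it suffices to check the identity of functions at the base point, and comparing the surviving factor with the defining product for $Q$ completes the proof.
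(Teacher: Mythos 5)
Your approach is exactly the paper's: apply $e^x e^y e^{-x} e^{-y}$ to a test section, track how each shift and each conjugating metric factor moves the evaluation points, and observe that the shifts on the argument of $s$ cancel while four metric factors survive at the corner points. The paper carries this out by evaluating the four operators from right to left on $f(z,\bar z)$; your plan of writing the whole composite as a single product of shift operators and metric factors and then commuting the shifts to the right is an equivalent bookkeeping device. However, the one formula you actually display is wrong as written: since $\nabla_{\bar l}=\p/\p\bar z^l$, the operator $e^{\pm y}=e^{\pm\bar\eta^l\p/\p\bar z^l}$ carries \emph{no} conjugation by $u,\tilde u$, so the correct factorization is
\[
 e^x e^y e^{-x} e^{-y}
 = u\, e^{\eta^k\p/\p z^k}\,\tilde u\; e^{\bar\eta^l\p/\p\bar z^l}\; u\, e^{-\eta^k\p/\p z^k}\,\tilde u\; e^{-\bar\eta^l\p/\p\bar z^l},
\]
with four metric factors, not the six (in the order $u,\tilde u,u,\tilde u,u,\tilde u$) appearing in your bracketed expression. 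Your version, once the shifts are pushed to the right, evaluates to $u(z+\eta,\bar z)\,\tilde u(z+\eta,\bar z+\bar\eta)\,u(z,\bar z+\bar\eta)\,\tilde u(z,\bar z)$, which is not $Q$ (the matrices do not commute, and the roles of $u$ and $\tilde u$ at the corners are interchanged). With the corrected factorization the same commutation of shifts immediately yields $u(z,\bar z)\,\tilde u(z+\eta,\bar z)\,u(z+\eta,\bar z+\bar\eta)\,\tilde u(z,\bar z+\bar\eta)=Q$, so this is a repairable transcription slip rather than a flaw in the method.
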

\begin{proof}
Given $f(z,\bar z) \in (C^\infty(U))^d$, we have
\begin{eqnarray*}
    e^x e^y e^{-x} e^{-y}f =  e^x e^y e^{-x} e^{-y}(f(z,\bar z - \bar \eta)) =\\
  e^x e^y (u(z, \bar z) e^{-\eta^k \frac{\p}{\p z^k}}(\tilde u(z, \bar z) f(z,\bar z - \bar \eta)))=\\
e^x e^y (u(z, \bar z)(\tilde u(z-\eta, \bar z) f(z-\eta,\bar z - \bar \eta)))=\\
e^x (u(z, \bar z + \bar \eta)(\tilde u(z-\eta, \bar z + \bar \eta) f(z-\eta,\bar z)))=Q f.
\end{eqnarray*}
\end{proof}
The operators $x$ and $y$ (topologically) generate a pronilpotent Lie algebra of operators on $C^\infty(E^\ast|_U)[[\eta,\bar\eta]]$ which contains $H = \log Q$. The element $H$ can be calculated using the Dynkin form of the Campbell-Baker-Hausdorff formula,
\begin{eqnarray}\label{E:ducomm}
  H = [x,y] + \frac{1}{2!}[x+y,[x,y]] + \\
\frac{1}{3!}\left(\frac{1}{2}[x,[y,[y,x]]] + [x+y,[x+y,[x,y]]]\right)+ \ldots \nonumber
\end{eqnarray}
Writing $H$ in the tensor form,
\begin{equation}\label{E:gkl}
    H = \sum_{K,\bar L}\frac{1}{|K|! |L|!} H_{K\bar L} \eta^K \bar \eta^L,
\end{equation}
where $H_{K\bar L}=0$ unless $|K| \geq 1$ and $|L| \geq 1$, we can express the tensor components  $H_{K\bar L}$ separately symmetric in $K$ and $\bar L$ in terms of covariant derivatives of the curvature $R_{k\bar l}$ of the canonical connection on $E$ using formula (\ref{E:ducomm}). The homogeneous component of $H$ of bidegree (1,1) with respect to the variables $\eta$ and $\bar \eta$ is
\[
    H_{k\bar l}\eta^k \bar \eta^l  = [x,y] =  [\nabla_k, \nabla_{\bar l}] \eta_k \bar \eta_l=- i R_{k \bar l}\eta_k \bar \eta_l,
\]
whence $H_{k\bar l} =-i R_{k\bar l}$. Observe that if $E$ is a line bundle, then
\[
      H_{K\bar L} =\p_K \p_{\bar L} \log u 
\]
if $|K| \geq 1$ and $|L| \geq 1$ and $H_{K\bar L}=0$ otherwise.

\section{Inversion formulas}\label{S:inv}

Given a pseudo-K\"ahler manifold $(M, \omega_{-1})$ of complex dimension $m$, assume that $\ast$ is a star product with separation of variables parameterized by a formal form $\omega = \frac{1}{\nu}\omega_{-1} + \omega_0 + \ldots$ and $E$ is a holomorphic Hermitian vector bundle of rank~ $d$ on $M$. Let $U \subset M$ be a contractible holomorphic coordinate chart, $\Phi \in C^\infty(U)[\nu^{-1}, \nu]]$ be a formal potential of $\omega$, and $u \in Mat_d(C^\infty(U))$ be the Hermitian fiber metric for a fixed trivialization $E|_U \cong U \times \C^d$. We denote by $D = D_\omega$ the formal Calabi function for the form $\omega$ on $U$,
\begin{equation}\label{E:calscal}
  D(\eta,\bar\eta) := \Phi(z, \bar z) - \Phi(z+\eta, \bar z) + \Phi(z+\eta,\bar z + \bar \eta) - \Phi(z, \bar z + \bar \eta).
\end{equation}
It is an element of $(C^\infty(U)[\nu^{-1},\nu]])[[\eta,\bar\eta]]$. We introduce the matrix-valued function
\begin{eqnarray}\label{E:calabi}
    \E(\eta, \bar\eta) : = e^{D(\eta,\bar\eta)} Q(\eta,\bar\eta) = e^{D+H} = \\
 e^{D} u(z, \bar z)\tilde u(z+\eta, \bar z) u(z+\eta,\bar z+\bar \eta)\tilde u(z, \bar z + \bar \eta),\nonumber
\end{eqnarray}
where $Q = Q_u$ and $H=H_u$ are as in Section \ref{S:cal}. The function $\E(\eta,\bar\eta)$ can be written in the tensor form,
\[
           \E(\eta, \bar\eta) =  \sum_{K, \bar L} \frac{1}{|K|! |\bar L|!} E_{K\bar L}\eta^K \bar \eta^L,
\]
where $E_{K\bar L}$ is separately symmetric in $K$ and $\bar L$. In this section we will prove inversion formulas connecting the tensors $E_{K\bar L}$ and $C^{\bar LK}$.
\begin{theorem}\label{T:inv}
The following formulas hold:
\begin{equation}\label{E:inv}
     E_{K\bar L}C^{\bar LI} = \Delta_K^I \mbox{ and } C^{\bar LI} E_{I\bar J} = \Delta_{\bar J}^{\bar L}.
\end{equation}
\end{theorem}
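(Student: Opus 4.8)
The plan is to prove the two identities of (\ref{E:inv}) by two parallel generating-function computations, one producing the left inverse and one the right inverse. The essential tool is the exponentiated form of the characteristic property (\ref{E:pphik}): for any (matrix-valued) local function $\psi$ one has
\[
   e^{\Phi(z+\eta,\bar z)-\Phi(z,\bar z)}\ast\psi=e^{\Phi(z+\eta,\bar z)-\Phi(z,\bar z)}\,\psi(z+\eta,\bar z),
\]
and, dually from (\ref{E:pphil}),
\[
   \psi\ast e^{\Phi(z,\bar z+\bar\eta)-\Phi(z,\bar z)}=\psi(z,\bar z+\bar\eta)\,e^{\Phi(z,\bar z+\bar\eta)-\Phi(z,\bar z)}.
\]
I would establish the first of these by writing the operator $f\mapsto\frac{\p\Phi}{\p z^k}\ast f$ as $e^{-\Phi}\frac{\p}{\p z^k}e^{\Phi}$, observing that these operators commute for different $k$, exponentiating in $\eta^k$ to get $e^{-\Phi}e^{\eta^k\p/\p z^k}e^{\Phi}$, and identifying the resulting left $\ast$-multiplier by evaluating on $\unit$; the scalar prefactor then makes the identity hold entrywise on matrices.

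For the first identity I introduce the matrix
\[
   A=A(\eta):=e^{\Phi(z+\eta,\bar z)-\Phi(z,\bar z)}\,u(z+\eta,\bar z)\,\tilde u(z,\bar z),
\]
and compute $A\ast_u g$ in two ways. On one hand, since $Au=e^{\Phi(z+\eta,\bar z)-\Phi(z,\bar z)}u(z+\eta,\bar z)=e^{\Phi(z+\eta,\bar z)-\Phi(z,\bar z)}\ast u$ by the exponentiated property, formula (\ref{E:uprod}) together with $u\ast v=\unit$ gives
\[
   A\ast_u g=\big((Au)\ast v\ast(gu)\big)\tilde u=\big(e^{\Phi(z+\eta,\bar z)-\Phi(z,\bar z)}\ast(gu)\big)\tilde u=A\cdot e^{x}g,
\]
where $e^{x}g=u(z,\bar z)\tilde u(z+\eta,\bar z)g(z+\eta,\bar z)u(z+\eta,\bar z)\tilde u(z,\bar z)$ is the covariant holomorphic shift of Section~\ref{S:cal}. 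On the other hand I factor the defining product (\ref{E:calabi}) of $\E$: grouping the four base points of the Calabi diamond shows $e^{D}=\alpha^{-1}(e^{y}\alpha)$ with $\alpha=e^{\Phi(z+\eta,\bar z)-\Phi(z,\bar z)}$ and $Q=\gamma\,e^{y}(\gamma^{-1})$ with $\gamma=u(z,\bar z)\tilde u(z+\eta,\bar z)$, whence
\[
   \E=e^{D}Q=A^{-1}(e^{y}A),
\]
and, expanding $e^{y}A$ in $\bar\eta$, $A^{-1}\p_{\bar L}A=\sum_{K}\frac{1}{|K|!}E_{K\bar L}\eta^{K}$.

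Combining these, I write $A\ast_u g$ by Lemma~\ref{L:tensform} as $(\nabla_{\bar L}A)C^{\bar LK}(\nabla_K g)$, multiply on the left by $A^{-1}$, use $\nabla_{\bar L}A=\p_{\bar L}A$, and substitute the last expansion to obtain
\[
   \sum_{K}\frac{1}{|K|!}\eta^{K}\big(E_{K\bar L}C^{\bar LI}\big)\nabla_{I}g=e^{x}g=\sum_{I}\frac{1}{|I|!}\eta^{I}\nabla_{I}g.
\]
As $g$ is arbitrary and $\eta$ is a formal variable, matching the coefficients of $\eta^{K}$ and of $\nabla_{I}g$ yields $E_{K\bar L}C^{\bar LI}=\Delta^{I}_{K}$. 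The second identity follows from the mirror construction: with $\beta=e^{\Phi(z,\bar z+\bar\eta)-\Phi(z,\bar z)}$ and $B=\beta\,u(z,\bar z+\bar\eta)\,\tilde u(z,\bar z)$ one checks $Bu=u\ast\beta$, hence $g\ast_u B=(e^{y}g)\,B$, while the same regrouping of (\ref{E:calabi}) gives $\E=(e^{x}B)B^{-1}$ and therefore $(\nabla_{I}B)B^{-1}=\sum_{J}\frac{1}{|J|!}E_{I\bar J}\bar\eta^{J}$; multiplying $g\ast_u B=(\nabla_{\bar L}g)C^{\bar LI}(\nabla_{I}B)$ on the right by $B^{-1}$ and matching coefficients in $\bar\eta$ gives $C^{\bar LI}E_{I\bar J}=\Delta^{\bar L}_{\bar J}$.

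The step I expect to be the main obstacle is the rigorous justification of the exponentiated characteristic property. Because $\Phi$ carries a pole $\frac1\nu\Phi_{-1}$, the factor $e^{\Phi(z+\eta,\bar z)-\Phi(z,\bar z)}$ is a formal series in $\eta$ whose coefficients are Laurent series in $\nu$ with polar parts of unbounded order, so one must verify that the $\ast$-exponential of $\eta^k\frac{\p\Phi}{\p z^k}$ is well defined and coincides with this ordinary exponential; the commutation of the operators $e^{-\Phi}\frac{\p}{\p z^k}e^{\Phi}$ and the identification of the $\ast$-exponential on $\unit$ are the points requiring care. Once this is in place everything else is algebra: the factorizations of $\E$ are purely formal, and the noncommutativity of $\ast_u$ causes no trouble, since each computation multiplies by a single generating element ($A$ for the first identity, $B$ for the second) on one fixed side.
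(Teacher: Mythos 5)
Your proof is correct and follows essentially the same route as the paper's: your multiplier $A$ is exactly the left $\ast_u$-multiplier of Lemma~\ref{L:leftexp}, your expansion $A^{-1}\p_{\bar L}A$ is the paper's tensor $Y_{\bar L}$, and the factorization $\E=A^{-1}(e^{y}A)$ reproduces the paper's verification that $\sum_{\bar L}\frac{1}{|\bar L|!}Y_{\bar L}\bar\eta^{L}=\E$. The concern you raise about the exponentiated characteristic property is handled the same way in the paper (coefficientwise in $\eta$, each coefficient being a finite composition of left multiplication operators with bounded polar part), and your mirror argument for the second identity is precisely the "similar" proof the paper leaves to the reader.
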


We break the proof into a series of technical lemmas. Given a matrix~ $a$, we denote by $r_a$ the right pointwise multiplication operator by~ $a$.
\begin{lemma}\label{L:left}
   For $g \in Mat_d(C^\infty(U))$ the following equalities hold:
\begin{equation}\label{E:left}
    \left(\frac{\p\Phi}{\p z^k} + \Gamma_k\right) \ast_u g = \frac{\p g}{\p z^k} +  \frac{\p\Phi}{\p z^k}  g + g \Gamma_k = \left(r_{\tilde u} e^{-\Phi} \frac{\p}{\p z^k} e^\Phi r_u\right) g.
\end{equation}
\end{lemma}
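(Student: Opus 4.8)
The plan is to prove both equalities by direct computation, feeding the local formula (\ref{E:uprod}) for $\ast_u$ into the characterization (\ref{E:pphik}) of the scalar product $\ast$. The one conceptual point I must justify is that (\ref{E:pphik}) may be applied to a matrix-valued argument entrywise: since $\p\Phi/\p z^k$ is a scalar function, left $\ast$-multiplication by it in $Mat_d(\A)$ acts separately on each matrix entry, so that $\frac{\p\Phi}{\p z^k} \ast h = \frac{\p\Phi}{\p z^k} h + \frac{\p h}{\p z^k}$ holds for every $h \in Mat_d(C^\infty(U))$.

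I would dispose of the second equality first, as it is purely pointwise. Expanding the operator applied to $g$ by the Leibniz rule gives
\[
\left(r_{\tilde u}\, e^{-\Phi}\, \frac{\p}{\p z^k}\, e^\Phi\, r_u\right) g = \left(\frac{\p\Phi}{\p z^k}\, gu + \frac{\p(gu)}{\p z^k}\right)\tilde u,
\]
and then writing $\p(gu)/\p z^k = (\p g/\p z^k)u + g(\p u/\p z^k)$, using $u\tilde u = \unit$, and recognizing $\Gamma_k = (\p u/\p z^k)\tilde u$ yields $\frac{\p g}{\p z^k} + \frac{\p\Phi}{\p z^k} g + g\Gamma_k$, exactly the middle term of the claimed chain.

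For the first equality the key observation is that $\Gamma_k u = (\p u/\p z^k)\tilde u\, u = \p u/\p z^k$, whence
\[
\left(\frac{\p\Phi}{\p z^k} + \Gamma_k\right) u = \frac{\p\Phi}{\p z^k} u + \frac{\p u}{\p z^k} = \frac{\p\Phi}{\p z^k} \ast u,
\]
the last step being (\ref{E:pphik}) applied entrywise. Substituting into (\ref{E:uprod}) and using associativity of $\ast$ gives
\[
\left(\frac{\p\Phi}{\p z^k} + \Gamma_k\right) \ast_u g = \left(\left(\frac{\p\Phi}{\p z^k} \ast u\right) \ast v \ast (gu)\right)\tilde u = \left(\frac{\p\Phi}{\p z^k} \ast (u \ast v) \ast (gu)\right)\tilde u.
\]
Since $v$ is by definition the $\ast$-inverse of $u$, we have $u \ast v = \unit$, so the right-hand side collapses to $\left(\frac{\p\Phi}{\p z^k} \ast (gu)\right)\tilde u$. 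Applying (\ref{E:pphik}) once more to $gu$ and simplifying exactly as in the second equality recovers $\frac{\p g}{\p z^k} + \frac{\p\Phi}{\p z^k} g + g\Gamma_k$, completing the chain.

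The computation is short and I do not anticipate a genuine obstacle. The decisive step is the identification $\left(\frac{\p\Phi}{\p z^k} + \Gamma_k\right)u = \frac{\p\Phi}{\p z^k} \ast u$, which converts the pointwise object $fu$ into a genuine $\ast$-product; this is precisely what allows associativity and the inversion $u \ast v = \unit$ to be invoked to eliminate the awkward factor $v$. Everything else reduces to the Leibniz rule together with the elementary identities $u\tilde u = \unit$ and $\Gamma_k = (\p u/\p z^k)\tilde u$.
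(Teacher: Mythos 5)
Your proof is correct and follows essentially the same route as the paper: both rest on the identification $\bigl(\frac{\p\Phi}{\p z^k} + \Gamma_k\bigr)u = \frac{\p\Phi}{\p z^k}\ast u$, substitution into (\ref{E:uprod}), cancellation of $u \ast v$, and a final application of (\ref{E:pphik}) to $gu$. You merely spell out two points the paper leaves implicit (that (\ref{E:pphik}) applies entrywise to matrix arguments, and the Leibniz-rule verification of the second equality), which is fine.
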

\begin{proof}
Using  formula (\ref{E:pphik}) and taking 
\[
f = \left(\frac{\p\Phi}{\p z^k}\ast u\right)\tilde u =  \frac{\p\Phi}{\p z^k} + \frac{\p u}{\p z^k}\tilde u = 
\frac{\p\Phi}{\p z^k} + \Gamma_k
\]
in (\ref{E:uprod}), we obtain the first equality in (\ref{E:left}):
\begin{eqnarray*}
\left(\frac{\p\Phi}{\p z^k} + \Gamma_k\right) \ast_u g  =
   \left( \left(\frac{\p\Phi}{\p z^k}\ast u\right)\tilde u\right) \ast_u g = \\
\left(\frac{\p\Phi}{\p z^k}\ast u \ast v \ast (gu)\right)\tilde u =
\left(\frac{\p\Phi}{\p z^k} \ast (gu)\right)\tilde u =\\
 \frac{\p\Phi}{\p z^k } g +  g \frac{\p u}{\p z^k} \tilde u +  \frac{\p g}{\p z^k} =
\frac{\p g}{\p z^k} +  \frac{\p\Phi}{\p z^k}  g + g \Gamma_k .
\end{eqnarray*}
The second equality in (\ref{E:left}) is straightforward.
\end{proof}
\begin{lemma}\label{L:leftexp}
  The following formula holds:
\begin{eqnarray}\label{E:leftexp}
\left(e^{\Phi(z+\eta,\bar z) - \Phi(z,\bar z)}u(z+\eta,\bar z) \tilde u(z,\bar z)\right) \ast_u g =\nonumber\\
 e^{\Phi(z+\eta,\bar z) - \Phi(z,\bar z)}g(z+\eta,\bar z) u(z+\eta,\bar z) \tilde u(z,\bar z).
\end{eqnarray}
\end{lemma}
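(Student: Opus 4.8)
The plan is to read the left-hand factor of (\ref{E:leftexp}) as a $\ast_u$-exponential and to exponentiate the preceding lemma. Abbreviate $L_k := \frac{\p\Phi}{\p z^k} + \Gamma_k$ and introduce the operator $\L_k := r_{\tilde u}\, e^{-\Phi}\frac{\p}{\p z^k}e^{\Phi}\, r_u$ on $Mat_d(C^\infty(U))$. With this notation Lemma \ref{L:left} says exactly that left $\ast_u$-multiplication by $L_k$ coincides with $\L_k$, i.e. $L_k \ast_u g = \L_k g$ (extended $\nu$- and $\eta$-linearly).

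First I would exponentiate these operators. Since $\tilde u u = \unit$ gives $r_u r_{\tilde u} = \mathrm{id}$ and $e^{\Phi}e^{-\Phi}=1$, every product of the operators $\L_k$ telescopes,
\[
\L_{k_1}\cdots\L_{k_n} = r_{\tilde u}\, e^{-\Phi}\frac{\p}{\p z^{k_1}}\cdots\frac{\p}{\p z^{k_n}}\, e^{\Phi}\, r_u .
\]
In particular the $\L_k$ pairwise commute, and summing the resulting series yields
\[
\exp\!\left(\eta^k \L_k\right) = r_{\tilde u}\, e^{-\Phi}\exp\!\left(\eta^k\frac{\p}{\p z^k}\right)e^{\Phi}\, r_u ,
\]
where $\exp(\eta^k \p/\p z^k)$ is the holomorphic shift $z \mapsto z+\eta$. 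Because $\ast_u$ is associative, the left regular representation $a \mapsto (g \mapsto a \ast_u g)$ is an algebra homomorphism; hence $\exp(\eta^k\L_k)$ is left $\ast_u$-multiplication by the $\ast_u$-exponential $\exp_{\ast_u}(\eta^k L_k) := \sum_{n\geq 0}\frac{1}{n!}(\eta^k L_k)^{\ast_u n}$, a well-defined element of $Mat_d(C^\infty(U)[\nu^{-1},\nu]])[[\eta]]$.

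It then remains to identify and evaluate this operator. Applying the displayed operator identity to the identity matrix $\unit$, and using $r_u\unit = u$ together with the shift, gives
\[
\exp_{\ast_u}(\eta^k L_k) = e^{\Phi(z+\eta,\bar z)-\Phi(z,\bar z)}\, u(z+\eta,\bar z)\, \tilde u(z,\bar z) =: F(\eta),
\]
which is precisely the left-hand factor of (\ref{E:leftexp}). Applying the same operator $\exp(\eta^k\L_k)=r_{\tilde u}e^{-\Phi}\exp(\eta^k\p/\p z^k)e^{\Phi}r_u$ to a general $g$ and simplifying $e^{-\Phi(z,\bar z)}e^{\Phi(z+\eta,\bar z)} = e^{\Phi(z+\eta,\bar z)-\Phi(z,\bar z)}$ produces the right-hand side. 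Chaining the two evaluations, $F(\eta)\ast_u g = \exp_{\ast_u}(\eta^k L_k)\ast_u g = \exp(\eta^k\L_k)g$, now gives (\ref{E:leftexp}).

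The computation is essentially mechanical, so the only point demanding care is bookkeeping rather than a genuine obstacle: one must track the ordering of the right-multiplication operators so that each interior cancellation $r_u r_{\tilde u}=\mathrm{id}$ is applied correctly, and one must note that, although $\Phi$ and hence $L_k$ carry a $\nu^{-1}$ term, each $(\eta^k L_k)^{\ast_u n}$ is homogeneous of degree $n$ in $\eta$, so the polar part in $\nu$ stays bounded in each $\eta$-degree and the $\ast_u$-exponential is a legitimate formal series.
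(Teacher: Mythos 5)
Your proof is correct and follows essentially the same route as the paper: both arguments exponentiate the left $\ast_u$-multiplication operators $r_{\tilde u}e^{-\Phi}\frac{\p}{\p z^k}e^{\Phi}r_u$ from Lemma \ref{L:left}, identify the resulting operator as left $\ast_u$-multiplication by its value on the identity matrix, and evaluate it on a general $g$ to obtain the right-hand side. Your added remarks on the telescoping of the conjugated operators and on the $\nu$-polar part merely make explicit what the paper's ``direct computation'' leaves implicit.
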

\begin{proof}
Lemma \ref{L:left} means that the operator
\[
    X_k :=  r_{\tilde u} e^{-\Phi}\left( \frac{\p}{\p z^k}\right) e^\Phi r_u
\]
is a left multiplication operator with respect to the star product $\ast_u$. Therefore,
\[
  \exp\left\{\eta^k X_k\right\} = r_{\tilde u} e^{-\Phi}\left( e^{\eta^k\frac{\p}{\p z^k}}\right) e^\Phi r_u
\]
is a left multiplication operator with respect to the product $\ast_u$ extended to the formal series in $\eta,\bar\eta$ by $(\eta,\bar\eta)$-linearity. We compute directly that
\[
   \exp\left\{\eta^k X_k\right\}g = e^{\Phi(z+\eta,\bar z) - \Phi(z,\bar z)}g(z+\eta,\bar z) u(z+\eta,\bar z) \tilde u(z,\bar z)
\]
for a matrix-valued function $g(z,\bar z)$. Applying the operator $\exp\left\{\eta^k X_k\right\}$ to the identity matrix (which is the identity for the product $\ast_u$), we obtain that it is the left $\ast_u$-multiplication operator by
\[
    e^{\Phi(z+\eta,\bar z) - \Phi(z,\bar z)}u(z+\eta,\bar z) \tilde u(z,\bar z),
\]
which concludes the proof.
\end{proof}
\begin{lemma}\label{L:expnabla}
 The following formula holds:
\begin{equation}\label{E:expnabla}
     \exp\{\eta^k \nabla_k\}g = u(z,\bar z) \tilde u(z+\eta,\bar z)g(z+\eta,\bar z)u(z+\eta, \bar z) \tilde u(z,\bar z).
\end{equation}
\end{lemma}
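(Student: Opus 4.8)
The plan is to recognize that on sections of $End(E^\ast|_U)$ the holomorphic covariant derivative $\nabla_k$ is simply the ordinary partial derivative $\p/\p z^k$ conjugated by the fiberwise operation $\Theta\colon f \mapsto \tilde u f u$, whose inverse is $\Theta^{-1}\colon h \mapsto u h \tilde u$. Indeed, reading off the case $K = k$ of (\ref{E:nablap}), one has $\nabla_k f = u\left(\p_k(\tilde u f u)\right)\tilde u$, which is precisely the operator identity $\nabla_k = \Theta^{-1}\,\p_k\,\Theta$. This single observation is the conceptual content of the lemma; in particular it makes the asserted commutativity of the $\nabla_k$ transparent, since they are all conjugates of the commuting operators $\p/\p z^k$ by one and the same $\Theta$.

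With this in hand, the key step is to push the conjugation through the exponential. Since $\eta^k\nabla_k = \Theta^{-1}\,(\eta^k\,\p/\p z^k)\,\Theta$ is a conjugate of $\eta^k\,\p/\p z^k$, and conjugation by the ($\nu$- and $\eta$-independent) invertible operator $\Theta$ is an algebra automorphism that therefore commutes with the formal exponential, I obtain at once the operator identity
\[
     \exp\{\eta^k\nabla_k\} = \Theta^{-1}\,\exp\!\left\{\eta^k\frac{\p}{\p z^k}\right\}\Theta .
\]
Equivalently, one may expand $\exp\{\eta^k\nabla_k\} = \sum_K \frac{1}{|K|!}\,\eta^K\,\nabla_K$ and use the telescoping $\nabla_K = \nabla_{k_1}\cdots\nabla_{k_p} = \Theta^{-1}\,\p_K\,\Theta$, in which each interior pair $\Theta\,\Theta^{-1}$ collapses to the identity, arriving at the same formula.

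It then remains only to unwind both sides on a matrix-valued function $g$. Applying $\Theta$ gives $\tilde u\, g\, u$; the holomorphic shift $\exp\{\eta^k\,\p/\p z^k\}$ implements the Taylor translation $z \mapsto z+\eta$ fixed in the notation at the beginning of Section~\ref{S:cal}, producing $\tilde u(z+\eta,\bar z)\,g(z+\eta,\bar z)\,u(z+\eta,\bar z)$; and the final $\Theta^{-1}$ flanks this expression with $u(z,\bar z)$ on the left and $\tilde u(z,\bar z)$ on the right, yielding exactly (\ref{E:expnabla}). The computation is elementary and entirely parallel to the proof of Lemma~\ref{L:commut}; the only point deserving attention — and what I would flag as the main (if modest) obstacle — is the justification of the displayed conjugation identity as an identity of formal-power-series operators in the $\eta^k$, i.e.\ confirming that the telescoping of the $z$-dependent operators $\Theta,\Theta^{-1}$ inside each $\nabla_K$ is valid term by term. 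Once that bookkeeping is checked, the result is immediate.
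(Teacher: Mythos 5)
Your proposal is correct and follows essentially the same route as the paper: the paper's proof consists precisely of the identity $\exp\{\eta^k \nabla_k\}g = u\left(\exp\left\{\eta^k \frac{\p}{\p z^k}\right\}(\tilde u g u)\right)\tilde u$ (your conjugation by $\Theta$, obtained from (\ref{E:nablap}) by the same telescoping) followed by the direct computation of the Taylor shift. The extra bookkeeping you flag is exactly what the paper's (\ref{E:nablap}) already records, so nothing further is needed.
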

\begin{proof}
The proof is obtained  from the formula
\[
   \exp\{\eta^k \nabla_k\}g  =  u \left(\exp\left\{\eta^k \frac{\p}{\p z^k}\right\}(\tilde u g u)\right)\tilde u
\]
by a direct computation.
\end{proof}
Using Lemma \ref{L:tensform}, we rewrite (\ref{E:leftexp}) as follows,
\begin{eqnarray}\label{E:exptens}
e^{\Phi(z+\eta,\bar z) - \Phi(z,\bar z)}g(z+\eta,\bar z) u(z+\eta,\bar z) \tilde u(z,\bar z) =\\
\nabla_{\bar L}\left(e^{\Phi(z+\eta,\bar z) - \Phi(z,\bar z)}u(z+\eta,\bar z) \tilde u(z,\bar z)\right) C^{\bar L I} \nabla_I g. \nonumber
\end{eqnarray}
We set
\begin{eqnarray}\label{E:ql}
     Y_{\bar L}(z, \bar z, \eta) := u(z,\bar z)\tilde u(z+\eta,\bar z) e^{-\Phi(z+\eta,\bar z) + \Phi(z,\bar z)}\\
\nabla_{\bar L}\left(e^{\Phi(z+\eta,\bar z) - \Phi(z,\bar z)}u(z+\eta,\bar z) \tilde u(z,\bar z)\right).\nonumber
\end{eqnarray}
Formulas  (\ref{E:expnabla}) and (\ref{E:exptens}) imply that
\begin{eqnarray}\label{E:expetanabla}
     \exp\{\eta^k \nabla_k\}g = Y_{\bar L} C^{\bar L I} \nabla_I g.
\end{eqnarray}
Since $g$ is arbitrary, (\ref{E:expetanabla}) is equivalent to the equality
\[
    \frac{1}{|I|!}\eta^I =  Y_{\bar L}C^{\bar L I}.
\]
To finish the proof of the first formula in (\ref{E:inv}) it suffices to verify that 
\[
             Y_{\bar L} = \sum_K \frac{1}{|K|!}\eta^K E_{K\bar L} 
\]
or, equivalently, that
\[
    \sum_{\bar L} \frac{1}{|\bar L|!}  Y_{\bar L} \bar \eta^L  = \sum_{K, \bar L} \frac{1}{|K|! |\bar L|!}E_{K\bar L}\eta^K \bar \eta^L = \E(\eta, \bar\eta).
\]
We have from (\ref{E:ql}) and the fact that $\nabla_{\bar l} = \frac{\p}{\p \bar z^l}$ that
\begin{eqnarray*}
    \sum_{\bar L} \frac{1}{|\bar L|!} Y_{\bar L} \bar \eta^L = 
 u(z,\bar z)\tilde u(z+\eta,\bar z) e^{-\Phi(z+\eta,\bar z) + \Phi(z,\bar z)}\\
\exp\left\{\bar\eta^l\frac{\p}{\p \bar z^l}\right\}\left(e^{\Phi(z+\eta,\bar z) - \Phi(z,\bar z)}u(z+\eta,\bar z) \tilde u(z,\bar z)\right) = \E(\eta, \bar\eta).
\end{eqnarray*}
The second formula can be proved similarly starting with (\ref{E:pphil}). Theorem~ \ref{T:inv} is proved.

Lemma \ref{L:left} has the following important application. Assume that $(E, u)$ is a holomorphic Hermitian line bundle. Since the endomorphism bundle of a line bundle is the trivial line bundle, then the associated product $\ast_u$ is a global star product with separation of variables on the scalar-valued functions on $M$. Denote by $\varkappa$ the curvature $(1,1)$-form of the canonical Hermitian connection on $(E,u)$  given locally by the formula $\varkappa = i \p\bar\p\log u$.

\begin{proposition}\label{P:twisted}
The characterizing form of the star product $\ast_u$ associated to a holomorphic Hermitian line bundle $(E,u)$  is $\omega + \varkappa$. 
\end{proposition}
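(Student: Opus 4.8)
The plan is to show that the star product $\ast_u$ on scalar functions, which exists by Lemmas~\ref{L:uglob} and~\ref{L:usep} and has separation of variables, is characterized by the formal form $\omega + \varkappa$. By the classification recalled in Section~\ref{S:sep}, it suffices to produce, for an arbitrary local potential, the characterizing identity~(\ref{E:pphik}). The natural candidate potential for $\omega + \varkappa$ is $\Phi + \log u$, where $\Phi$ is a local formal potential of $\omega$ (so $\omega = i\p\bar\p\Phi$) and $\varkappa = i\p\bar\p\log u$. Here $u$ is scalar and positive since $E$ is a line bundle, so $\log u$ makes sense and is a genuine (non-formal) potential of $\varkappa$, and $\Phi + \log u$ is a local formal potential of $\omega + \varkappa$.

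First I would specialize Lemma~\ref{L:left} to the line-bundle case $d = 1$. There $\Gamma_k = (\p u/\p z^k)\tilde u = \p(\log u)/\p z^k$, and $\ast_u$ acts on scalar functions, so the first equality in~(\ref{E:left}) reads
\[
    \left(\frac{\p\Phi}{\p z^k} + \frac{\p \log u}{\p z^k}\right) \ast_u g = \frac{\p g}{\p z^k} + \left(\frac{\p\Phi}{\p z^k} + \frac{\p \log u}{\p z^k}\right) g.
\]
This is precisely the identity~(\ref{E:pphik}) for the product $\ast_u$ with the potential $\Psi := \Phi + \log u$, since the left-hand factor is $\p\Psi/\p z^k$.

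Next I would invoke the characterization of star products with separation of variables from~(\ref{E:pphik}): a star product with separation of variables is completely determined by the property that $\frac{\p\Psi}{\p z^k} \ast g = \frac{\p\Psi}{\p z^k} g + \frac{\p g}{\p z^k}$ holds for every local potential $\Psi$ of its characterizing form. Having verified exactly this identity for $\ast_u$ with $\Psi = \Phi + \log u$ a potential of $\omega + \varkappa$, bijectivity of the parameterization forces the characterizing form of $\ast_u$ to equal $\omega + \varkappa$. The main point requiring care is the bookkeeping of what is formal and what is not: $\Phi$ carries a pole in $\nu$ and contributes the formal part of the potential, whereas $\log u$ is $\nu$-independent and shifts only $\omega_0$ by $\varkappa$; one should check that $\Phi + \log u$ is an admissible formal potential (its leading term $\frac1\nu\Phi_{-1}$ still has potential $\omega_{-1}$, and the added closed $(1,1)$-form $\varkappa$ is absorbed at order $\nu^0$), so that $\omega + \varkappa$ is indeed a legitimate characterizing form. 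Granting this, the proof is immediate from Lemma~\ref{L:left}, and the anticipated obstacle is merely confirming that Lemma~\ref{L:left}, proved in the matrix setting, collapses correctly to the scalar identity~(\ref{E:pphik}) in rank one.
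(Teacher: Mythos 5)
Your proposal is correct and follows essentially the same route as the paper: specialize Lemma~\ref{L:left} to the scalar case $d=1$, where $\Gamma_k = \p(\log u)/\p z^k$, obtain the identity $\frac{\p(\Phi+\log u)}{\p z^k}\ast_u g = \frac{\p(\Phi+\log u)}{\p z^k}g + \frac{\p g}{\p z^k}$, and conclude from the characterization~(\ref{E:pphik}). Your extra remarks on the admissibility of the formal potential $\Phi+\log u$ are a harmless elaboration of what the paper leaves implicit.
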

\begin{proof}
In the scalar case ($d=1$),
\[
     \Gamma_k = \frac{\p \log u}{\p z^k}.
\] 
It follows from (\ref{E:left}) that
\[
   \frac{\p(\Phi + \log u)}{\p z^k} \ast_u g  = \frac{\p(\Phi + \log u)}{\p z^k } g +   \frac{\p g}{\p z^k}.
\]
Now the statement of the proposition follows from the condition (\ref{E:pphik}).
\end{proof}

\section{Operators on a formal Fock space}

In this section we define a formal Fock space $\V$ and an algebra of operators on $\V$. We will use the same assumptions as in Section \ref{S:inv}. We introduce scalar-valued tensors $G_{K\bar L}$ and $G^{\bar LK}$ separately symmetric in $K$ and $\bar L$, such that  $G_{K\bar L}=0$ and $G^{\bar LK}=0$ if $|K| \neq |\bar L|$,
\begin{eqnarray*}
      G_{k_1 \ldots k_r \bar l_1 \ldots \bar l_r} = \frac{1}{\nu^r r!} \sum_{\sigma \in S_r} g_{k_1 \bar l_{\sigma(1)}} \ldots g_{k_r \bar l_{\sigma(r)}}, \mbox{ and }\\
      G^{ \bar l_1 \ldots \bar l_rk_1 \ldots k_r} = \frac{\nu^r}{r!} \sum_{\sigma \in S_r} g^{\bar l_{\sigma(1)}k_1} \ldots g^{\bar l_{\sigma(r)}k_r}.
\end{eqnarray*}
It can be checked that
\[
      G_{K\bar L} G^{\bar L I} = \Delta_K^I \mbox{ and } G^{\bar JK} G_{K\bar L} = \Delta_{\bar L}^{\bar J}.
\]
The tensors  $G_{K\bar L}$ and $G^{\bar LK}$ will be used to raise and lower tensor indices. We define matrix-valued tensors $C_K^I$ and $E_K^I$ as follows:
\begin{eqnarray*}
C_K^I = G_{K\bar L}C^{\bar LI} \mbox{ and } E_K^I = E_{K\bar L} G^{\bar L I}.
\end{eqnarray*}
Formulas (\ref{E:inv}) imply that
\begin{equation}\label{E:invop}
     E_K^P C_P^I = \Delta_K^I  \mbox{ and } C_K^P E_P^I = \Delta_K^I.
\end{equation}
We want to interpret (\ref{E:invop}) as  inversion formulas for operators on a formal Fock space. A formal Fock space $\V$ is defined as the set of all $\nu$-formal matrix-valued tensors
\[
    f = f_I  = \sum_{r \geq 0} \nu^r f_{r, I}
\]
whose components $f_{r, I}$ are symmetric in $I$. We denote by $\O$ the space of $\nu$-formal matrix-valued tensors
\[
     A_K^I =  \sum_{r \geq 0} \nu^r A_{r, K}^I
\]
separately symmetric in $I$ and $K$ and such that for any fixed $p$ and $r$ there is an integer $q$ such that $A_{r, K}^I=0$ provided $|K|=p$ and $|I| > q$. A tensor $A_K^I \in \O$ acts on the formal Fock space $\V$ as a $\nu$-linear operator with the (infinite) matrix $A_K^I$,
\[
    \V \ni f=f_K \mapsto A_K^I f_I.
\] 
The tensors from $\O$ form an algebra with the identity ~$\Delta_K^I$. The composition of tensors $A_K^I$ and $B_K^I$ is $A_K^P B_P^I$.

We want to show that the tensor $C_K^I$ is from $\O$. We write it as a series in $\nu$,
\[
    C_K^I = \sum_{s \in \Z} \nu^s C_{s, K}^I,
\]
whence
\begin{equation}\label{E:cc}
  C_{s, k_1 \ldots k_p}^{i_1 \ldots i_q} = g_{k_1\bar l_1} \ldots g_{k_p\bar l_p} C_{s+p}^{\bar l_1 \ldots \bar l_p i_1 \ldots i_q}.
\end{equation}
The star product $\ast_u$ is natural. Therefore, if the component $C_r^{\bar LK}$ is nonzero, then $|L| \leq r$ and $|K| \leq r$. It follows from (\ref{E:cc}) that if $C_{s, K}^I$ is nonzero, then $s \geq 0$ and $q \leq s + p$, which implies that $C_K^I$ is from $\O$.

We have proved the following statement.
\begin{proposition}\label{P:}
  The tensor $C_K^I$ defines an operator on the formal Fock space $\V$.
\end{proposition}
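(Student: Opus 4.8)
The plan is to verify that $C_K^I$ belongs to the algebra $\O$, since by construction every tensor in $\O$ acts on $\V$ as the $\nu$-linear operator $f_K \mapsto A_K^I f_I$; establishing $C_K^I \in \O$ therefore exactly yields the claim. Membership in $\O$ requires two things: that $C_K^I$ be separately symmetric in the upper and lower indices, and that it satisfy the local finiteness condition that for each fixed $\nu$-order and each fixed $|K|$ the upper multi-index $I$ range over a bounded length. The symmetry is immediate: by definition $C_K^I = G_{K\bar L} C^{\bar L I}$, where $G_{K\bar L}$ is separately symmetric in $K$ and $\bar L$, and $C^{\bar L I}$ is separately symmetric in $\bar L$ and $I$ by Lemma~\ref{L:tensform}, so the contraction over $\bar L$ inherits symmetry in $K$ and in $I$. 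The substantive content is thus the finiteness condition, together with the requirement that $C_K^I$ be a genuine power series in $\nu$ with no negative powers.

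First I would expand $C_K^I = \sum_{s} \nu^s C_{s,K}^I$ and use the index-raising relation~(\ref{E:cc}) to express the component with $|K| = p$ and $|I| = q$ at $\nu$-order $s$ in terms of a single component $C_{s+p}^{\bar L I}$ of the original product tensor, with $|\bar L| = p$ and $|I| = q$. Since the lowering tensor $G_{K\bar L}$ contributes a factor of $\nu^{-p}$ when $|K| = |\bar L| = p$, raising the $K$-index shifts the relevant $\nu$-degree of $C^{\bar L I}$ from $s$ up to $s+p$. This reduces the whole question to a statement purely about the coefficients $C_r^{\bar L K}$ of the star product $\ast_u$.

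The key step is then to invoke the naturality of $\ast_u$ recorded in Proposition~\ref{P:nat}: if $C_r^{\bar L K}$ is nonzero then $|L| \leq r$ and $|K| \leq r$. Applied to $C_{s+p}^{\bar L I}$ with $|\bar L| = p$, naturality forces $p \leq s+p$, i.e.\ $s \geq 0$, which rules out negative powers of $\nu$ and shows $C_K^I$ is a power series; and it simultaneously forces $|I| = q \leq s+p$, which is precisely the required bound on the length of $I$ for fixed $\nu$-order $s$ and fixed $|K| = p$. Together these two consequences place $C_K^I$ in $\O$. I expect the only genuine \emph{obstacle} to be recognizing that a single inequality coming from naturality controls both the lower bound on the $\nu$-degree and the upper bound on $|I|$ at once; once the degree bookkeeping of~(\ref{E:cc}) is set up correctly, the remaining metric contractions are routine.
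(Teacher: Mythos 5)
Your proposal is correct and follows essentially the same route as the paper: expand $C_K^I$ in powers of $\nu$, use the index-lowering relation~(\ref{E:cc}) to shift the $\nu$-degree by $|K|$, and apply the naturality of $\ast_u$ from Proposition~\ref{P:nat} to obtain both $s \geq 0$ and $|I| \leq s + |K|$, hence membership in $\O$. The added remark on separate symmetry is a harmless elaboration of what the paper takes as given.
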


In the subsequent sections we will show that the tensor $E_K^I$ is also from $\O$. Then (\ref{E:invop}) will imply that the operators on $\V$ corresponding to $E_K^I$ and $C_K^I$ are inverse to each other.

\section{Feynman graphs and tensors}

In this section we use the same assumptions as in Section \ref{S:inv}. We will define a set $\M$ of equivalence classes of Feynman graphs and its subset ~$\Ncal$. Recall that a directed graph without cycles has a natural partial order $\succ$ on its vertices such that for vertices $v$ and $w$ we have $v \succ w$ if there is a directed path from $v$ to $w$. 

A graph from $\M$ is a directed graph with multiple edges and without cycles. It has two external vertices, the source $\i$ with no incoming edges, and the sink $\o$ with no outgoing edges. It has a possibly empty set of internal vertices, which are regular or special. A regular vertex has an integral weight $r \geq -1$. The special vertices are linearly ordered. The linear order $>$ on the special vertices agrees with the partial order $\succ$ on all vertices so that if $s_1 \succ s_2$ for special vertices $s_1,s_2$, then $s_1 > s_2$ (a directed path connecting $s_1$ and $s_2$ may pass through regular vertices). Each internal vertex has at least one incoming and at least one outgoing edge. Moreover, the total number of edges incident to a regular vertex of weight $r = -1$ is at least three.  The type of a regular internal vertex of weight $r$ is the triple $(p,q,r)$  and the type of a special internal vertex is the pair $(p,q)$, where $p$ and $q$ are the numbers of incoming and outgoing edges, respectively. An isomorphism of two graphs from~ $\M$ respects the source, the sink, the regular internal vertices, their weights, the special internal vertices, and their linear order. For a graph $\Gamma$ from $\M$ we denote by $[\Gamma] \in \M$ its equivalence class, by $Aut(\Gamma)$ the group of auto\-morphisms of $\Gamma$, by  $|Aut(\Gamma)|$ its order, by $\mathbf{s}(\Gamma)$ the number of special internal vertices, by $\pp(\Gamma)$ the degree of the sink, and by $\qq(\Gamma)$ the degree of the source of $\Gamma$. The subset $\Ncal \subset \M$ consists of the equivalence classes of graphs which have no edges connecting internal vertices. Each edge of a graph from $\Ncal$ connects an internal vertex with the source or the sink, or connects the source directly with the sink.

We relate to each graph $\Gamma$ from $\M$ matrix-valued tensors $\Gamma_{K\bar L}, \Gamma_K^I$, and $\Gamma^{\bar LK}$ separately symmetric in $I,K,$ and $\bar L$ and such that
\[
     \Gamma_K^I = \Gamma_{K \bar L} G^{\bar L I} = G_{K \bar L} \Gamma^{\bar L I}. 
\]

The tensor $\Gamma^{\bar L K}$ is constructed as follows. To each regular internal vertex of type $(p,q,r)$ we relate the scalar-valued function
\begin{equation}\label{E:scal}
      \nu^r\frac{\p^{p+q}\Phi_r}{\p z^{k_1} \ldots z^{k_p} \bar z^{l_1} \ldots \bar z^{l_q}}.
\end{equation}
To each special  internal vertex of type $(p,q)$ we relate the matrix-valued function
\begin{equation}\label{E:matr}
     H_{k_1 \ldots k_p \bar l_1 \ldots \bar l_q },
\end{equation}
where the tensor $H_{K\bar L}$ is given by formula (\ref{E:gkl}).
To each edge we relate the tensor $\nu g^{\bar lk}$ so that the holomorphic index $k$ corresponds to its head and the antiholomorphic index $\bar l$ corresponds to its tail.  Then we contract the upper and lower indices according to the incidences of edges and vertices and compose the matrix-valued factors corresponding to the special vertices  in the descending order from left to right (i.e., the special vertex corresponding to the left-most matrix-valued factor is the largest with respect to the linear order). Finally, we separately symmetrize the resulting matrix-valued tensor in the holomorphic and antiholomorphic indices corresponding to the sink and the source, respectively. Observe that $\Gamma^{\bar L K} = 0$ unless $|K| = \pp(\Gamma)$ and $|\bar L| = \qq(\Gamma)$.

In order to construct the tensor $\Gamma_K^I$, we relate to each regular internal vertex of type $(p,q,r)$ the scalar-valued function
\[
     \nu^{r+q} \frac{\p^{p+q}\Phi_r}{\p z^{k_1} \ldots z^{k_p} \bar z^{l_1} \ldots \bar z^{l_q}}g^{\bar l_1 i_1}\ldots g^{\bar l_q i_q}
\]
and we relate to each special vertex  of type $(p,q)$ the matrix-valued function
\[
    \nu^q H_{k_1 \ldots k_p \bar l_1 \ldots \bar l_q }g^{\bar l_1 i_1}\ldots g^{\bar l_q i_q}.
\]
The lower and upper holomorphic indices correspond to the incoming and outgoing edges, respectively.
To each edge directly connecting the source with the sink we relate a copy of the Kronecker tensor $\delta_k^i$. Then we contract the upper and lower holomorphic indices according to the incidences of edges and vertices and compose the matrix-valued factors corresponding to the special vertices according to their order.  Finally, we separately symmetrize the resulting matrix-valued tensor in  the upper and lower holomorphic indices corresponding to the sink and the source, respectively. We have that $\Gamma_K^I=0$ unless $|I|= \pp(\Gamma)$ and $|K|= \qq(\Gamma)$.

We will need the tensor $\Gamma_{K\bar L}$ only if the graph $\Gamma$ is from $\Ncal$, in which case we construct it as follows. We relate to each  regular internal vertex of type $(p,q,r)$ the scalar-valued function (\ref{E:scal}) and to each special  internal vertex of type $(p,q)$  the matrix-valued function (\ref{E:matr}). To each edge directly connecting the source with the sink we relate the tensor $(1/\nu)g_{k\bar l}$. We ignore the other edges. Thus, in a graph from $\Ncal$, we treat an edge connecting the source directly with the sink as if it contains an invisible regular internal vertex of type $(1,1,-1)$. Then we multiply the functions corresponding to the internal vertices so that the matrix-valued functions are multiplied according to the order of the corresponding special vertices, and separately symmetrize the resulting tensor with respect to the holomorphic and antiholomorphic indices. We have that $\Gamma_{K\bar L}=0$ unless $|K|=\qq(\Gamma)$  and $|L| = \pp(\Gamma)$.

{\it Example.}  The following graph $\Gamma$ with one regular internal vertex of type $(1,2,3)$ is from $\Ncal$:
\begin{equation}\label{E:graph}
 { \xygraph{
!{<0cm,0cm>;<1cm,0cm>:<0cm,0cm>::}
!{(0,0) }*+{\circ_{\mathbf{in}}}="a"
!{(1,0) }*+{\bullet_{3}}="b"
!{(2.5,0) }*+{\circ_{\mathbf{out}}}="c"
"a":"b"
"b":@/^/"c"
"b":@/_/"c"
} }.
\end{equation}

For this graph $\pp(\Gamma) = 2, \qq(\Gamma)=1$, and $\mathbf{s}(\Gamma)=0$.  The tensor $\Gamma^{\bar L K} =0$ unless $|\bar L| = 1$ and $|K|=2$, in which case
\[
      \Gamma^{\bar l k_1 k_2} = \nu^6 g^{\bar lk}\frac{\p \Phi_3}{\p z^k \p \bar z^{l_1} \p \bar z^{l_2}} g^{\bar l_1 k_1}g^{\bar l_2 k_2}.
\]
The tensor $\Gamma_K^I=0$ unless $|I|= 2$ and $|K|=1$, and
\[
       \Gamma_{k}^{i_1 i_2} =  \nu^5 \frac{\p \Phi_3}{\p z^k \p \bar z^{l_1} \p \bar z^{l_2}} g^{\bar l_1 i_1}g^{\bar l_2 i_2}.
\]
The tensor $\Gamma_{K\bar L}=0$ unless $|K|=1$ and $|\bar L|=2$, and
\[
     \Gamma_{k \bar l_1 \bar l_2} = \nu^3 \frac{\p \Phi_3}{\p z^k \p \bar z^{l_1} \p \bar z^{l_2}}.
\]

To prove the following lemma we slightly modify the proof of Lemma~ 4 from \cite{CMP5}.

\begin{lemma}\label{L:four}
  Let $a(\Gamma)$ be an arbitrary complex-valued function on ~$\M$. Then the tensor
\begin{equation}\label{E:format}
 A_K^I = \sum_{[\Gamma] \in \M} a(\Gamma)\Gamma_K^I
\end{equation}
is from $\O$ and thus determines an operator on the formal Fock space~ $\V$. The summation in (\ref{E:format}) is over a set of representatives of the classes from $\M$.
\end{lemma}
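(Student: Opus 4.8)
The plan is to check directly the two conditions defining membership in $\O$---that $A_K^I$ is a formal power series in $\nu$ with no negative powers, and that for each fixed $\nu$-order $r$ and each fixed source degree $p=|K|$ the sink degrees $|I|$ that occur are bounded---while simultaneously verifying that every coefficient is a \emph{finite} sum of graph contributions, so that $A_K^I$ is well defined in the first place. Separate symmetry in $I$ and $K$ is inherited from each $\Gamma_K^I$, so the whole matter reduces to a combinatorial bound on the size of a graph $\Gamma$ in terms of the single power of $\nu$ carried by $\Gamma_K^I$ together with the degree $\qq(\Gamma)$ of its source.

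First I would observe that $\Gamma_K^I$ is a single monomial in $\nu$, since the functions $\Phi_r$, the metric components $g^{\bar lk}$ and $g_{k\bar l}$, and the components $H_{K\bar L}$ of the Calabi function are all $\nu$-independent. Reading the exponent off the construction of $\Gamma_K^I$, it equals $\sum_{\mathrm{reg}}(w(v)+q(v))+\sum_{\mathrm{sp}}q(v)$, the weight plus outdegree summed over regular internal vertices and the outdegree summed over the special ones, and $\Gamma_K^I=0$ unless $|K|=\qq(\Gamma)$ and $|I|=\pp(\Gamma)$ (here $w(v)$ denotes the weight of a regular vertex $v$, and $q(v)$ its outdegree). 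Because every internal vertex has at least one outgoing edge, every summand is nonnegative---for a weight $-1$ vertex one uses $q(v)\geq 1$---so no negative powers of $\nu$ appear and the first condition holds.

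The heart of the proof is an a priori bound on the number of edges, and this is the step I expect to be the main obstacle. Writing $\varepsilon$ for the number of edges, $r$ for the $\nu$-exponent above, and $p=\qq(\Gamma)$, counting outgoing edges gives $\sum_{\mathrm{internal}}q(v)=\varepsilon-p$, whence $\varepsilon=r+p-\sum_{\mathrm{reg}}w(v)\leq r+p+V_{-1}$, where $V_{-1}$ is the number of weight $-1$ regular vertices and I have used $w(v)\geq -1$. The handshake identity $2\varepsilon=\qq(\Gamma)+\pp(\Gamma)+\sum_{\mathrm{internal}}\deg(v)$, combined with the hypothesis that a weight $-1$ vertex has total degree at least $3$, gives $3V_{-1}\leq 2\varepsilon$, so substituting yields $\varepsilon\leq 3(r+p)$. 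This is precisely the point where the degree $\geq 3$ requirement on weight $-1$ vertices is indispensable: without it the number of such vertices, and with it the graph, could be arbitrarily large at fixed $\nu$-order. The remainder is bookkeeping: since $|I|=\pp(\Gamma)\leq\varepsilon\leq 3(r+p)$, the second condition of $\O$ holds with $q=3(r+p)$; the bound on $\varepsilon$ also bounds the number of internal vertices (each of degree $\geq 2$) and, through $\sum_{\mathrm{reg}}w(v)=r+p-\varepsilon$, all the regular weights. With a bounded number of special vertices there are only finitely many admissible linear orders on them, so for fixed $r$, $p$, and $|I|$ only finitely many isomorphism classes $[\Gamma]$ contribute. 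Hence each coefficient $A_{r,K}^I$ is a finite sum, $A_K^I$ is well defined, and $A_K^I\in\O$.
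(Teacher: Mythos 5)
Your proof is correct: the observation that $\Gamma_K^I$ is a single $\nu$-monomial of nonnegative exponent $\sum_{\mathrm{reg}}(w(v)+q(v))+\sum_{\mathrm{sp}}q(v)$, the edge bound $\varepsilon\leq 3(r+p)$ obtained from the handshake identity together with the requirement that weight $-1$ vertices have total degree at least three, and the resulting finiteness of contributing isomorphism classes for fixed $r$ and $|K|$ give exactly what membership in $\O$ demands. The paper itself supplies no proof here, only the remark that one slightly modifies Lemma 4 of \cite{CMP5}; your argument is precisely the counting argument that proof rests on, correctly adapted to the matrix-valued setting with special vertices, and it rightly pinpoints the degree-three condition as the step without which the bound would fail.
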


\section{A graph-theoretic formula for the tensor $E_K^I$}

In this section we consider Feynman graphs only from $\Ncal$. We denote by $\N$ the set of positive integers and by $\Z_{\geq 0}$ the set of nonnegative integers, and set $[k] :=\{1,2,\ldots k\}$ for $k \in \N$ and $[0]:=\emptyset$.  It will be convenient to treat an edge in a graph from $\Ncal$ connecting the source directly with the sink as if it contains an invisible regular internal vertex of type $(1,1,-1)$. The set of types of regular internal vertices is
\[
\mathbb{T} := \{(p,q,r) \in \Z^3| p,q \geq 1, r \geq -1\}.
\]
Let $\Gamma$ be a graph from $\Ncal$ with $k \geq 0$ special vertices $s_k >  \ldots > s_1$. The ordering functions $\mathbf{P}, \mathbf{Q}:[k] \to \N$ give the type $(\mathbf{P}(i),\mathbf{Q}(i))$ of the special internal vertex ~$s_i$.  The multiplicity function of regular vertices of $\Gamma$ is a mapping $\n: \mathbb{T} \to \Z_{\geq 0}$ with a finite support. The value $\n(1,1,-1)$ is the number of edges connecting the source directly with the sink. 

A graph $\Gamma$ from $\Ncal$ is completely determined by the data $(\n,k, \mathbf{P},\mathbf{Q})$, where $ \mathbf{P},\mathbf{Q} \in \N^{[k]}$. Its group of automorphisms $Aut(\Gamma)$ permutes for each type $(p,q,r)$ the $\n(p,q,r)$ regular internal vertices of that type and the $\mathbf{n}(1,1,-1)$ edges connecting the source directly with the sink, and separately permutes the incoming and outgoing edges of each internal vertex. Therefore, the order of $Aut(\Gamma)$ is a function of $(\n,k, \mathbf{P},\mathbf{Q})$,
$|Aut(\Gamma)| =  \lambda(\n,k, \mathbf{P},\mathbf{Q})$, where
\[
     \lambda(\n,k, \mathbf{P},\mathbf{Q}) =\left( \prod_{(p,q,r)\in \mathbb{T}} \n(p,q,r)! (p! q!)^{\n(p,q,r)}\right) \prod_{i=1}^k (\mathbf{P}(i)! \mathbf{Q}(i)!).
\]

We will write the Calabi functions $D = D_\omega$ and $H= H_u$ as formal series,
\[
    D = \sum_{(p,q,r) \in \mathbb{T}} \frac{1}{p! q!} D_{p,q,r} \mbox{ and } H = \sum_{p,q \geq 1} \frac{1}{p! q!} H_{p,q},
\]
where 
\[
    D_{p,q,r} = \nu^r\frac{\p^{p+q}\Phi_r}{\p z^{k_1} \ldots z^{k_p} \bar z^{l_1} \ldots \bar z^{l_q}}\eta^{k_1} \ldots \eta^{k_p}\bar \eta^{l_1} \ldots \bar \eta^{l_q}
\]
and
\[
  H_{p,q} = H_{k_1 \ldots k_p \bar l_1 \ldots \bar l_q }\eta^{k_1} \ldots \eta^{k_p}\bar \eta^{l_1} \ldots \bar \eta^{l_q}
\]
are both  homogeneous of bidegree $(p,q)$ with respect to the variables~ $\eta,\bar\eta$. We want to express $\E = \exp\{D+H\}$ in terms of $D_{p,q,r}$ and $H_{p,q}$. On the one hand,
\[
  e^D = \sum_{\n}\prod_{(p,q,r)\in \mathbb{T}} \frac{1}{\n(p,q,r)!} \left(\frac{1}{p! q!} D_{p,q,r}\right)^{\n(p,q,r)},
\]
where the summation is over the multiplicity functions. On the other hand,
\[
    e^H = \sum_{k=0}^\infty \frac{1}{k!}\left(  \sum_{\mathbf{P}, \mathbf{Q} \in \N^{[k]}}  \prod_{i=1}^k\frac{1}{\mathbf{P}(i)! \mathbf{Q}(i)!}\prod_{i=1}^k H_{\mathbf{P}(i),\mathbf{Q}(i)}\right),
\]
where we write the product of the matrix-valued factors as
\[
   \prod_{i=1}^k H_{\mathbf{P}(i),\mathbf{Q}(i)} =  H_{\mathbf{P}(k),\mathbf{Q}(k)} H_{\mathbf{P}(k-1),\mathbf{Q}(k-1)} \ldots H_{\mathbf{P}(1),\mathbf{Q}(1)}.
\]
Therefore,
\begin{equation}\label{E:enspq}
     \E = \sum_{(\n,k, \mathbf{P},\mathbf{Q})} \frac{1}{\lambda(\n,k, \mathbf{P},\mathbf{Q}) k!}\prod_{(p,q,r)\in \mathbb{T}}  \left(D_{p,q,r}\right)^{\n(p,q,r)}\prod_{i=1}^k H_{\mathbf{P}(i),\mathbf{Q}(i)},
\end{equation}
where the summation is over the tuples $(\n,k, \mathbf{P},\mathbf{Q})$ with $ \mathbf{P},\mathbf{Q} \in \N^{[k]}$. Observe that if $\Gamma$ is a graph from $\Ncal$ parameterized by  $(\n,k, \mathbf{P},\mathbf{Q})$, then
\[
       \prod_{(p,q,r)\in \mathbb{T}}  \left(D_{p,q,r}\right)^{\n(p,q,r)}\prod_{i=1}^k H_{\mathbf{P}(i),\mathbf{Q}(i)} = \Gamma_{K\bar L} \eta^K \bar \eta^L.
\]
Formula (\ref{E:enspq}) can be rewritten as follows:
\[
     \E(\eta,\bar\eta) = \sum_{[\Gamma] \in \Ncal} \frac{1}{|Aut(\Gamma)|\, \mathbf{s}(\Gamma)!}\Gamma_{K\bar L} \eta^K \bar \eta^L.
\]
Using the fact that $\Gamma_{K\bar L} = 0$ unless $|K| = \qq(\Gamma)$ and $|\bar L| = \pp(\Gamma)$, we get that
\begin{equation}\label{E:ekl}
      E_{K\bar L} =  \sum_{[\Gamma] \in \Ncal} \frac{\pp(\Gamma)! \qq(\Gamma)!}{|Aut(\Gamma)|\, \mathbf{s}(\Gamma)!}\Gamma_{K\bar L}.
\end{equation}
Lifting the antiholomorphic tensor index $\bar L$ in (\ref{E:ekl}) by the tensor $G^{\bar LI}$, we obtain the following theorem:
\begin{theorem}\label{T:eki}
  The tensor $E_K^I$ is given by the graph-theoretic formula
\begin{equation}\label{E:ekigraph}
      E_K^I =  \sum_{[\Gamma] \in \Ncal} \frac{\pp(\Gamma)! \qq(\Gamma)!}{|Aut(\Gamma)|\, \mathbf{s}(\Gamma)!}\Gamma_K^I.
\end{equation}
\end{theorem}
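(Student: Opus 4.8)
The plan is to deduce the formula for $E_K^I$ from the expansion (\ref{E:ekl}) of $E_{K\bar L}$ that has just been derived, simply by raising the antiholomorphic multi-index $\bar L$ with the tensor $G^{\bar LI}$. Two definitional identities make this essentially immediate. First, the mixed tensor was introduced exactly as $E_K^I = E_{K\bar L}G^{\bar LI}$. Second, to every graph $\Gamma \in \Ncal$ we attached tensors $\Gamma_{K\bar L}, \Gamma_K^I, \Gamma^{\bar LK}$ satisfying $\Gamma_K^I = \Gamma_{K\bar L}G^{\bar LI}$. Granting these, I would contract both sides of (\ref{E:ekl}) over $\bar L$ against $G^{\bar LI}$: the left-hand side becomes $E_K^I$, the scalar weights $\pp(\Gamma)!\,\qq(\Gamma)!/(|Aut(\Gamma)|\,\mathbf{s}(\Gamma)!)$ pass through the contraction untouched, and each summand $\Gamma_{K\bar L}G^{\bar LI}$ collapses to $\Gamma_K^I$. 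This yields precisely (\ref{E:ekigraph}).

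Before distributing the contraction over the sum I would check that it is legitimate term by term. For fixed $K$ and $I$ only graphs with $\qq(\Gamma)=|K|$ and $\pp(\Gamma)=|I|$ contribute, and among those the power of $\nu$ carried by $\Gamma_K^I$ increases with the number and weights of the internal vertices and with the number of edges; hence each coefficient of a fixed power of $\nu$ receives contributions from only finitely many graphs. Since contracting with $G^{\bar LI}$ is a $\nu$-linear operation, it commutes with the formal, locally finite sum, and the termwise manipulation above is justified.

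The one step I would treat most carefully is the verification of $\Gamma_K^I = \Gamma_{K\bar L}G^{\bar LI}$ for $\Gamma \in \Ncal$, together with the attendant bookkeeping of powers of $\nu$, since this is the only place where anything is at stake. Here I would compare the two constructions directly: in $\Gamma_{K\bar L}$ the $\pp(\Gamma)$ antiholomorphic indices sit on the sink and on the outgoing legs of the internal vertices, whereas in $\Gamma_K^I$ each such index $\bar l$ has already been raised by a factor $\nu g^{\bar l i}$, which is exactly what the extra $\nu^q$ weights on regular vertices, the $\nu^q$ weights on special vertices, and the $g^{\bar l i}$ contractions encode. Because $\Gamma_{K\bar L}$ is symmetric in $\bar L$, the symmetrization built into $G^{\bar LI} = \frac{\nu^{|I|}}{|I|!}\sum_{\sigma}g^{\bar l_{\sigma(1)}i_1}\cdots g^{\bar l_{\sigma(|I|)}i_{|I|}}$ is harmless and the powers of $\nu$ match; a source-to-sink edge, treated as an invisible vertex of type $(1,1,-1)$ carrying $(1/\nu)g_{k\bar l}$ in $\Gamma_{K\bar L}$, is turned by its raising factor $\nu g^{\bar l i}$ into the Kronecker $\delta_k^i$ of $\Gamma_K^I$, which is the consistency check I would point to.

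Finally, I would note a corollary that costs nothing extra: applying Lemma \ref{L:four} to the weight function $a(\Gamma) = \pp(\Gamma)!\,\qq(\Gamma)!/(|Aut(\Gamma)|\,\mathbf{s}(\Gamma)!)$, extended by zero off $\Ncal$, shows at once that the tensor $E_K^I$ of (\ref{E:ekigraph}) lies in $\O$ and hence defines an operator on the formal Fock space $\V$. Combined with the inversion relations (\ref{E:invop}), this is what identifies the operators of $E_K^I$ and $C_K^I$ as mutually inverse. Since all of the genuine combinatorial work, namely the reorganization of $\exp\{D+H\}$ into a sum over graphs with the correct automorphism factors, was already carried out in deriving (\ref{E:ekl}), the proof of the theorem itself reduces to this single index-raising computation.
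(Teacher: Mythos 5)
Your proposal is correct and follows the paper's own route exactly: the paper derives the expansion (\ref{E:ekl}) for $E_{K\bar L}$ from $\E=\exp\{D+H\}$ and then obtains Theorem \ref{T:eki} precisely by raising the index $\bar L$ with $G^{\bar LI}$, using the defining relation $\Gamma_K^I=\Gamma_{K\bar L}G^{\bar LI}$. Your extra checks on local finiteness, the matching of powers of $\nu$, and the $(1/\nu)g_{k\bar l}\cdot\nu g^{\bar li}=\delta_k^i$ consistency for source-to-sink edges are sound and only make explicit what the paper leaves implicit.
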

The tensor $E_K^I$ is thus represented in the format of (\ref{E:format}). Lemma \ref{L:four} implies that this tensor is from $\O$. Using formulas (\ref{E:invop}), we arrive at the following theorem.
\begin{theorem}\label{T:invop}
The tensors $E_K^I$ and $C_K^I$  determine inverse operators on the formal Fock space $\V$.
\end{theorem}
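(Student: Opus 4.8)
The plan is to assemble the theorem from the structural facts already established, since all the analytic and combinatorial work has been carried out in the preceding sections; what remains is to verify that both $E_K^I$ and $C_K^I$ are genuine elements of the operator algebra $\O$ and then to read off the two-sided inversion from the identities (\ref{E:invop}).

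First I would confirm that each tensor defines an operator on the formal Fock space $\V$, i.e. that it lies in $\O$. For $C_K^I$ this is precisely Proposition \ref{P:}, which uses naturalness of $\ast_u$ (Proposition \ref{P:nat}) together with the index-shift relation (\ref{E:cc}) to bound the degrees so that the membership condition defining $\O$ holds. For $E_K^I$ the graph-theoretic expression (\ref{E:ekigraph}) furnished by Theorem \ref{T:eki} exhibits it in the exact format of (\ref{E:format}), with $a(\Gamma) = \pp(\Gamma)!\,\qq(\Gamma)!/(|Aut(\Gamma)|\,\mathbf{s}(\Gamma)!)$ supported on the subset $\Ncal \subset \M$; hence Lemma \ref{L:four} yields $E_K^I \in \O$ at once.

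Next I would recall that $\O$ is an associative algebra whose identity is the symmetrizing tensor $\Delta_K^I$ and whose product is the contraction $A_K^P B_P^I$ over the internal index $P$. With both $E_K^I$ and $C_K^I$ now known to be elements of this algebra, the two equalities in (\ref{E:invop}), namely $E_K^P C_P^I = \Delta_K^I$ and $C_K^P E_P^I = \Delta_K^I$, state exactly that the associated operators on $\V$ are two-sided inverses of one another, which is the assertion of the theorem. I would also note that (\ref{E:invop}) is itself nothing but the inversion formulas of Theorem \ref{T:inv} rewritten after raising and lowering the antiholomorphic index with $G^{\bar LK}$ and $G_{K\bar L}$, using $G_{K\bar L}G^{\bar LI} = \Delta_K^I$ and $G^{\bar JK}G_{K\bar L} = \Delta_{\bar L}^{\bar J}$.

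The single point deserving care is the well-definedness of the compositions $E_K^P C_P^I$ and $C_K^P E_P^I$ as $\nu$-adically convergent contractions over the shared index $P$: for arbitrary separately symmetric tensors such a sum need not even make sense. This is precisely where membership in $\O$ is indispensable, since the degree-control condition built into the definition of $\O$ is exactly what guarantees that composition is a closed, associative operation with unit $\Delta_K^I$. Thus the only substantive obstacle is disposed of by the two membership results above, and the theorem then follows immediately by combining them with (\ref{E:invop}).
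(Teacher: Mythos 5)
Your proposal is correct and follows the paper's own argument essentially verbatim: the paper likewise establishes $C_K^I \in \O$ via Proposition \ref{P:}, obtains $E_K^I \in \O$ from Theorem \ref{T:eki} together with Lemma \ref{L:four}, and then concludes by invoking the identities (\ref{E:invop}) derived from Theorem \ref{T:inv}. Your added remark on why membership in $\O$ is needed for the contractions over $P$ to be well defined is exactly the point the paper relies on implicitly.
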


\section{A composition formula}

In this section we define two dual operations on graphs from $\M$, concatenation and partition, and use these operations to prove a composition formula for operators on the formal Fock space $\V$ expressed in terms of graphs from $\M$.

Given a graph $\Gamma$ from $\M$, recall that $\Gamma_K^I=0$ unless $|K| = \qq(\Gamma)$ and $|I| = \pp(\Gamma)$. If $\Gamma_1$ and $\Gamma_2$ are two graphs from $\M$, the tensors $ (\Gamma_1)_K^I$ and $ (\Gamma_2)_K^I$ are composable if $\pp(\Gamma_1) = \qq(\Gamma_2)$. If this condition holds, we will say that the graphs $\Gamma_1$ and $\Gamma_2$ are composable. 

We define an operation of concatenation of composable graphs ~$\Gamma_1$ and $\Gamma_2$  from $\M$. For $i=1,2$ let $I_i$ and $O_i$ be the sets of edges incident to the source and the sink of the graph $\Gamma_i$, respectively. We set
\[
      n : = \pp(\Gamma_1) = \qq(\Gamma_2) = |O_1| = |I_2|.
\]
Let $\tau: O_1 \to I_2$ be a bijection.  The concatenation of graphs $\Gamma_1$ and ~$\Gamma_2$ corresponding to $\tau$ is a graph from $\M$ denoted by $\Gamma_1 \#_\tau \Gamma_2$ and constructed as follows.  The sink from $\Gamma_1$ and the source from $\Gamma_2$ are removed and the loose end of each edge $e \in O_1$ is spliced with the loose end of the egde $\tau(e) \in I_2$.
There is a unique linear order on the special internal vertices of $\Gamma_1 \#_\tau \Gamma_2$ which agrees with the linear order on the special vertices of $\Gamma_1$ and $\Gamma_2$ and is such that each special vertex of $\Gamma_1 \#_\tau \Gamma_2$ inherited from $\Gamma_1$ is greater than every special vertex inhertied from~ $\Gamma_2$. With this order, each concatenation $\Gamma_1 \#_\tau \Gamma_2$ is from ~$\M$. Clearly,
\[
      \qq(\Gamma_1 \#_\tau \Gamma_2) = \qq(\Gamma_1) \mbox{ and } \pp(\Gamma_1 \#_\tau \Gamma_2) = \pp(\Gamma_2).
\]
The composition of the tensors $(\Gamma_1)_K^I$ and  $(\Gamma_2)_K^I$ can be given by the formula
\begin{equation}\label{E:prodgamma}
     (\Gamma_1)_K^P (\Gamma_2)_P^I = \frac{1}{n!}\sum_\tau    (\Gamma_1 \#_\tau \Gamma_2)_K^I,
\end{equation}
where the summation is over the $n!$ bijections $\tau:  O_1 \to I_2$.

Given a graph $\Gamma$ from $\M$, we say that $\pi$ is an admissible partition of $\Gamma$ if the set of internal vertices of $\Gamma$ is partitioned into two sets ~$V_1^\pi$ and ~$V_2^\pi$ such that each special vertex from $V_1^\pi$ is greater than every special vertex from $V_2^\pi$ and every edge connecting $\i \cup V_1^\pi$ with $V_2^\pi \cup \o$ has its tail in $\i \cup V_1^\pi$ and head in $V_2^\pi \cup \o$.

{\it Example.} If $\Gamma_1$ and $\Gamma_2$ are composable graphs from $\M$, then any concatenation $\Gamma_1 \#_\tau \Gamma_2$ has a natural admissible partition, $\pi_\tau$, into the vertices inherited from $\Gamma_1$ and $\Gamma_2$. 

 If a partition $\pi$ of $\Gamma$ is admissible, we construct two graphs $\Gamma_1^\pi$ and $\Gamma_2^\pi$ from $\M$ as follows. The graph $\Gamma_1^\pi$ is obtained from $\Gamma$ by removing the vertices from $V_2^\pi$ and the edges between the vertices in $V_2^\pi \cup \o$, and connecting the loose ends to the sink $\o$.  The graph~ $\Gamma_2^\pi$ is obtained from $\Gamma$ by removing the vertices from $V_1^\pi$ and the edges between the vertices in $\i \cup V_1^\pi$, and connecting the loose ends to the source $\i$. The set of edges in $\Gamma$ connecting vertices from $\i \cup V_1^\pi$ with vertices from $V_2^\pi \cup \o$ naturally bijectively corresponds to the set ~$O_1^\pi$ of vertices in~ $\Gamma_1^\pi$ incident to the sink and to the set of vertices $I_2^\pi$  in $\Gamma_2^\pi$ incident to the source. Thus, there is a natural bijection $\tau^\pi: O_1^\pi \to I_2^\pi$. The graph $\Gamma_1^\pi \#_{\tau^\pi} \Gamma_2^\pi$ is canonically identified with the graph $\Gamma$ and the natural partition of $\Gamma_1^\pi \#_{\tau^\pi} \Gamma_2^\pi$ corresponds to $\pi$ under this identification.

Fix graphs $\Gamma, \Gamma_1$, and $\Gamma_2$ from $M$ such that $\Gamma_1$ and $\Gamma_2$ are composable and $\Gamma$ is isomorphic to some concatenation of $\Gamma_1$ and $\Gamma_2$. 
Denote by ~$\mathrm{T}$ the set of bijections  $\tau:  O_1 \to I_2$ such that $\Gamma_1 \#_\tau\Gamma_2$ is isomorphic to $\Gamma$ and by $\Pi$ the set of admissible partitions $\pi$ of $\Gamma$ such that $\Gamma_1^\pi$ and $\Gamma_2^\pi$ are isomorphic to $\Gamma_1$ and $\Gamma_2$, respectively.
\begin{lemma}\label{L:partconc}
The following identity holds,
\begin{equation}\label{E:rednosum}
   \frac{|\mathrm{T}|}{|Aut(\Gamma_1)||Aut(\Gamma_2)|} = \frac{|\Pi|}{|Aut(\Gamma)|}.
\end{equation}
\end{lemma}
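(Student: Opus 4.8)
The plan is to establish (\ref{E:rednosum}) by counting in two ways the set $S$ of pairs $(\tau,f)$ in which $\tau\colon O_1\to I_2$ is a bijection and $f\colon\Gamma_1\#_\tau\Gamma_2\to\Gamma$ is an isomorphism of graphs from $\M$.

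First I would group the elements of $S$ according to the bijection $\tau$. If $\tau\notin\mathrm{T}$, then $\Gamma_1\#_\tau\Gamma_2$ is not isomorphic to $\Gamma$ and there is no admissible $f$; if $\tau\in\mathrm{T}$, then the isomorphisms $\Gamma_1\#_\tau\Gamma_2\to\Gamma$ form a torsor under $Aut(\Gamma)$, so there are exactly $|Aut(\Gamma)|$ of them. Hence
\[
   |S|=|\mathrm{T}|\,|Aut(\Gamma)|.
\]

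Next I would group the elements of $S$ according to the admissible partition they induce. To a pair $(\tau,f)$ I attach $f(\pi_\tau)$, the image under $f$ of the natural partition $\pi_\tau$ of $\Gamma_1\#_\tau\Gamma_2$ into the vertices inherited from $\Gamma_1$ and from $\Gamma_2$. Since $f$ is an isomorphism and $\pi_\tau$ is admissible, $f(\pi_\tau)$ is an admissible partition of $\Gamma$; and because partition and concatenation are mutually inverse one has $\Gamma_1^{\pi_\tau}=\Gamma_1$ and $\Gamma_2^{\pi_\tau}=\Gamma_2$, whence $\Gamma_1^{f(\pi_\tau)}\cong\Gamma_1$ and $\Gamma_2^{f(\pi_\tau)}\cong\Gamma_2$, so that $f(\pi_\tau)\in\Pi$. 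To finish I would show that the fiber over a fixed $\pi\in\Pi$ has exactly $|Aut(\Gamma_1)|\,|Aut(\Gamma_2)|$ elements. Using the canonical identification $\Gamma=\Gamma_1^\pi\#_{\tau^\pi}\Gamma_2^\pi$, any $(\tau,f)$ in this fiber must carry the $\Gamma_1$- and $\Gamma_2$-parts of the concatenation onto $\Gamma_1^\pi$ and $\Gamma_2^\pi$, so $f$ restricts to isomorphisms $f_1\colon\Gamma_1\to\Gamma_1^\pi$ and $f_2\colon\Gamma_2\to\Gamma_2^\pi$. Conversely, a pair $(f_1,f_2)$ determines the unique compatible bijection $\tau=(f_2|_{I_2})^{-1}\circ\tau^\pi\circ(f_1|_{O_1})$ and the glued isomorphism $f=f_1\#f_2$ with $f(\pi_\tau)=\pi$. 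This makes $(f_1,f_2)\mapsto(\tau,f)$ a bijection onto the fiber, and since $\Gamma_i\cong\Gamma_i^\pi$ there are $|Aut(\Gamma_1)|\,|Aut(\Gamma_2)|$ such pairs. Therefore
\[
   |S|=|\Pi|\,|Aut(\Gamma_1)|\,|Aut(\Gamma_2)|.
\]

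Equating the two counts of $|S|$ and rearranging gives precisely (\ref{E:rednosum}). I expect the main obstacle to be the fiber computation: one must verify that restriction and gluing are genuinely inverse operations within the class of graphs from $\M$, which requires checking not only the induced bijections of the edge-sets $O_1,I_2$ but also, crucially, that the linear orders on the special internal vertices are respected. The order conditions built into the definitions of $\M$, of the concatenation $\#_\tau$, and of an admissible partition are exactly what forces $\tau$ to be unique and $f_1\#f_2$ to be a morphism in $\M$; carrying out this bookkeeping, together with confirming that the natural partition of $\Gamma_1^\pi\#_{\tau^\pi}\Gamma_2^\pi$ recovers $\pi$ under the canonical identification, is the heart of the argument.
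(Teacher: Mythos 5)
Your double count of the set $S$ of pairs $(\tau,f)$ is exactly the paper's argument: $S$ is the edge set of the bipartite graph on $\mathrm{T}\sqcup\Pi$ that the paper constructs, your count by $\tau$ is the statement that each vertex of $\mathrm{T}$ has degree $|Aut(\Gamma)|$, and your fiber computation over $\pi\in\Pi$ (via restriction to the pair $(f_1,f_2)$ and the unique compatible $\tau$) is the paper's bijection $E(\tau,\pi)\to E(\pi,\tau)$ showing each vertex of $\Pi$ has degree $|Aut(\Gamma_1)||Aut(\Gamma_2)|$. The proposal is correct and essentially identical to the paper's proof.
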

\begin{proof}
Given $\tau \in \mathrm{T}$ and $\pi \in \Pi$, denote by $E(\tau,\pi)$ the set of isomorphisms $\gamma$ of graphs $\Gamma_1 \#_\tau\Gamma_2$ and ~$\Gamma$ such that the natural partition $\pi_\tau$ of $\Gamma_1 \#_\tau\Gamma_2$ corresponds to the partition $\pi$ of $\Gamma$ under this isomorphism. For $i = 1,2$ let $\gamma_i$ be an  isomorphism  of $\Gamma_i$ and $\Gamma_i^\pi$ such that $\gamma_1, \gamma_2$ transfer the bijection $\tau: O_1 \to I_2$ to $\tau^\pi: O_1^\pi \to I_2^\pi$.
Denote by $E(\pi,\tau)$ the set of such pairs $(\gamma_1, \gamma_2)$. There is a natural bijection from $E(\tau,\pi)$ to $E(\pi,\tau)$ ($\gamma$ induces $\gamma_1$ and $\gamma_2$ and vice versa).

Consider a bipartite graph $B$ whose set of vertices  consists of the independent sets $\Pi$ and~ $\mathrm{T}$ and the set of edges connecting  $\pi \in \Pi$ with $\tau \in \mathrm{T}$ is identified with $E(\tau,\pi)$ or $E(\pi,\tau)$. Given a bijection $\tau \in \mathrm{T}$, consider any isomorphism $\gamma$ of  $\Gamma_1 \#_\tau\Gamma_2$ and ~$\Gamma$. Transferring the natural partition $\pi_\tau$ of  $\Gamma_1 \#_\tau\Gamma_2$ to $\Gamma$ via~ $\gamma$ we obtain an admissible partition $\pi \in \Pi$. Then ~$\gamma$ corresponds to an edge connecting the vertices $\pi$ and ~$\tau$. It follows that the degree of each vertex from ~$T$ is $|Aut(\Gamma)|$. 

Given a partition $\pi \in \Pi$, consider for $i=1,2$ an isomorphism $\gamma_i$ of $\Gamma_i^\pi$ and~ $\Gamma_i$. Using the isomorphisms ~$\gamma_1$ and ~$\gamma_2$ we transfer the natural bijection $\tau^\pi: O_1^\pi \to I_2^\pi$ to some bijection $\tau \in \mathrm{T}$. Then the pair $(\gamma_1,\gamma_2)$ corresponds to an edge connecting $\pi$ and $\tau$. Therefore, the degree of each vertex from $\Pi$ is $|Aut(\Gamma_1)||Aut(\Gamma_2)|$. Calculating the number of edges in $B$ as $|T| |Aut(\Gamma)|$ and as $|\Pi||Aut(\Gamma_1)||Aut(\Gamma_2)|$, we obtain the identity ~ (\ref{E:rednosum}).
\end{proof}

Assume that $a(\Gamma)$ and $b(\Gamma)$ are complex-valued functions on $\M$. According to Lemma \ref{L:four}, the tensors
\begin{equation}\label{E:twotens}
      A_K^I = \sum_{[\Gamma]\in \M} \frac{a(\Gamma)}{|Aut(\Gamma)|} \Gamma_K^I \mbox{ and }  B_K^I = \sum_{[\Gamma]\in \M} \frac{b(\Gamma)}{|Aut(\Gamma)|} \Gamma_K^I 
\end{equation}
define operators on the formal Fock space $\V$ and thus have a well defined composition. We will prove the following composition formula.
\begin{theorem}\label{T:}
   The composition of tensors (\ref{E:twotens}) is given by the formula
\begin{equation}\label{E:prodab}
    A_K^P B_P^I = \sum_{[\Gamma]\in \M} \frac{1}{|Aut(\Gamma)|}\sum_\pi \frac{a(\Gamma_1^\pi)b(\Gamma_2^\pi)}{n_\pi !} \Gamma_K^I,
\end{equation}
where the summation in the inner sum is over the admissible partitions ~ $\pi$ of the graph $\Gamma$ and $n_\pi = \pp(\Gamma_1^\pi) = \qq(\Gamma_2^\pi)$.
\end{theorem}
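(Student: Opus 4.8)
The plan is to expand the composition $A_K^P B_P^I$ directly from the definitions (\ref{E:twotens}), reduce the product of individual graph tensors by the concatenation formula (\ref{E:prodgamma}), and then reorganize the result by the isomorphism class of the concatenation, using Lemma \ref{L:partconc} to match the combinatorial weights.

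First I would substitute the expansions (\ref{E:twotens}). Since $(\Gamma_1)_K^P (\Gamma_2)_P^I = 0$ unless $\pp(\Gamma_1) = \qq(\Gamma_2)$, only composable pairs contribute, and for such pairs (\ref{E:prodgamma}) gives
\[
A_K^P B_P^I = \sum_{[\Gamma_1],[\Gamma_2]} \frac{a(\Gamma_1)\,b(\Gamma_2)}{|Aut(\Gamma_1)|\,|Aut(\Gamma_2)|}\,\frac{1}{n!}\sum_\tau (\Gamma_1 \#_\tau \Gamma_2)_K^I,
\]
where the outer sum runs over composable pairs of equivalence classes, $n = \pp(\Gamma_1) = \qq(\Gamma_2)$, and $\tau$ ranges over the $n!$ bijections $O_1 \to I_2$. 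All sums define elements of $\O$ by Lemma \ref{L:four}, so the rearrangements are legitimate.

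Next I would collect, for each fixed class $[\Gamma] \in \M$, the total coefficient of $\Gamma_K^I$. Since $(\Gamma_1 \#_\tau \Gamma_2)_K^I = \Gamma_K^I$ whenever $\Gamma_1 \#_\tau \Gamma_2$ is isomorphic to $\Gamma$, the contribution of a fixed composable pair $(\Gamma_1,\Gamma_2)$ to that coefficient is $\frac{a(\Gamma_1) b(\Gamma_2)}{n!}\cdot \frac{|\mathrm{T}|}{|Aut(\Gamma_1)||Aut(\Gamma_2)|}$, where $\mathrm{T} = \mathrm{T}(\Gamma_1,\Gamma_2,\Gamma)$ is the set of bijections $\tau$ with $\Gamma_1 \#_\tau \Gamma_2 \cong \Gamma$. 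Applying Lemma \ref{L:partconc} converts this weight into $\frac{a(\Gamma_1) b(\Gamma_2)}{n!}\cdot \frac{|\Pi|}{|Aut(\Gamma)|}$, where $\Pi = \Pi(\Gamma_1,\Gamma_2,\Gamma)$ is the set of admissible partitions $\pi$ of $\Gamma$ with $\Gamma_1^\pi \cong \Gamma_1$ and $\Gamma_2^\pi \cong \Gamma_2$. For every $\pi \in \Pi$ one has $a(\Gamma_1^\pi) = a(\Gamma_1)$, $b(\Gamma_2^\pi) = b(\Gamma_2)$, and $n_\pi = n$, so this contribution equals $\frac{1}{|Aut(\Gamma)|}\sum_{\pi \in \Pi}\frac{a(\Gamma_1^\pi) b(\Gamma_2^\pi)}{n_\pi!}$.

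Finally I would sum over all composable pairs. Each admissible partition $\pi$ of $\Gamma$ determines the classes $[\Gamma_1^\pi]$ and $[\Gamma_2^\pi]$ uniquely, so the sets $\Pi(\Gamma_1,\Gamma_2,\Gamma)$ are disjoint and their union over composable pairs is exactly the set of all admissible partitions of $\Gamma$. Hence the coefficient of $\Gamma_K^I$ becomes $\frac{1}{|Aut(\Gamma)|}\sum_\pi \frac{a(\Gamma_1^\pi) b(\Gamma_2^\pi)}{n_\pi!}$, with $\pi$ ranging over all admissible partitions of $\Gamma$, which is precisely (\ref{E:prodab}). The substantive step is the weight identity of Lemma \ref{L:partconc}; the main thing to be careful about is the passage from summing over representatives and bijections $\tau$ to summing over isomorphism classes and partitions $\pi$ without over- or under-counting, which is exactly what the bijective correspondence between concatenations and partitions in that lemma controls.
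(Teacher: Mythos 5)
Your proposal is correct and follows essentially the same route as the paper: both arguments rest on the concatenation formula (\ref{E:prodgamma}) and the counting identity of Lemma \ref{L:partconc}, the only cosmetic difference being that the paper reduces by bilinearity to the case where $a$ and $b$ are characteristic functions of single classes, whereas you extract the coefficient of each $\Gamma_K^I$ directly from the full double sum. The bookkeeping you add at the end --- that the sets $\Pi(\Gamma_1,\Gamma_2,\Gamma)$ are disjoint and exhaust the admissible partitions of $\Gamma$ --- is exactly the point the paper leaves implicit.
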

\begin{proof} Assume that $\Gamma_1$ and $\Gamma_2$ are graphs from $\M$ such that $\pp(\Gamma_1) = \qq(\Gamma_2)$ and denote by $n$ the common value of $\pp(\Gamma_1)$ and $ \qq(\Gamma_2)$.  We reduce the proof to the case where $a(\Gamma)$ and 
$b(\Gamma)$ are the characteristic functions of $[\Gamma_1]$ and $[\Gamma_2]$, respectively. Taking into account formula~ (\ref{E:prodgamma}), we get that (\ref{E:prodab}) reduces to
\begin{equation}\label{E:reduced}
     \frac{1}{|Aut(\Gamma_1)||Aut(\Gamma_2)|} \sum_\tau (\Gamma_1 \#_\tau\Gamma_2)_K^I =
  \sum_{[\Gamma] \in \M} \frac{\lambda(\Gamma)}{|Aut(\Gamma)|} \Gamma_K^I,
\end{equation}
where the summation on the left-hand side of (\ref{E:reduced}) is over the bijections  $\tau:  O_1 \to I_2$, and $\lambda(\Gamma)$ is the number of admissible partitions~ $\pi$ of $\Gamma$ such that $\Gamma_1^\pi$ and $\Gamma_2^\pi$ are isomorphic to $\Gamma_1$ and $\Gamma_2$, respectively. Clearly, $\lambda(\Gamma)$ is supported on the graphs isomorphic to concatenations of $\Gamma_1$ and~ $\Gamma_2$. Then for each representative $\Gamma$ from $\M$ in the sum on the right-hand side of  (\ref{E:reduced}) the contribution of the tensor $\Gamma_K^I$ to both sides of  (\ref{E:reduced})  is the same according to the identity (\ref{E:rednosum}).
\end{proof}

\section{A formula for the star product $\ast_u$}

In this section we give an expression for the tensor $C^{\bar LK}$ in terms of graphs from $\M$.

We will call an internal vertex of a graph from $\M$ {\it frontal} if all incoming edges incident to this vertex are outgoing only from the source. Fix a graph $\Gamma$ from $M$ and denote by $X(\Gamma)$ the set of its frontal regular vertices.  Assume that $\Gamma$ has $k = \mathbf{s}(\Gamma) \geq 0$ special internal vertices. It will be convenient to enumerate the special vertices in $\Gamma$ in the ascending order, 
$s_k > s_{k-1} >\ldots > s_1$. We denote by $l(\Gamma)$ the length of the longest chain $s_k > s_{k-1} >\ldots$ consisting only of frontal special vertices, i.e., $l(\Gamma) = 0$ if there are no frontal special vertices, the vertices $s_i$ with $k \geq i \geq k - l(\Gamma) +1$ are frontal, and if $l(\Gamma) < k$, then $s_{k - l(\Gamma)}$ is not frontal. 

\begin{lemma}\label{L:frontal}
If $\Gamma$ has at least one internal vertex, then there exists a frontal vertex and $|X(\Gamma)|+l(\Gamma) >0$.
\end{lemma}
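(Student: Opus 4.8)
The plan is to base the whole argument on one structural observation: every vertex that is maximal among the internal vertices of $\Gamma$ with respect to the partial order $\succ$ is frontal. Indeed, let $v$ be such a maximal internal vertex and let $e$ be any edge pointing into $v$. Its tail cannot be the sink $\o$, which has no outgoing edges, so it is either the source $\i$ or another internal vertex $w$; in the latter case the single edge $w \to v$ would give $w \succ v$, contradicting the maximality of $v$. Hence every edge into $v$ issues from $\i$, which is precisely the definition of $v$ being frontal. Since $\Gamma$ is finite, acyclic, and has at least one internal vertex, the induced subposet of internal vertices is finite and nonempty and therefore possesses a maximal element; by the observation this element is frontal, which proves the first assertion.

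It remains to show $|X(\Gamma)| + l(\Gamma) > 0$, and I would do this by cases. If $\Gamma$ has a frontal \emph{regular} vertex, then $|X(\Gamma)| \geq 1$ and we are done. Otherwise $\Gamma$ has no frontal regular vertex, so every maximal internal vertex—being frontal by the observation—must be \emph{special}, and it suffices to prove that the top special vertex $s_k$ is frontal, since then the chain defining $l(\Gamma)$ has length at least one and $l(\Gamma) \geq 1$. Suppose, for contradiction, that $s_k$ is not frontal; then by the (contrapositive of the) observation $s_k$ is not maximal among internal vertices, so the internal vertices strictly above $s_k$ form a nonempty set, a $\succ$-maximal element $v$ of which is in fact maximal among \emph{all} internal vertices and satisfies $v \succ s_k$. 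This $v$ is frontal, hence in the present case special, and $v \succ s_k$ forces $v > s_k$ in the linear order because that order extends $\succ$ on the special vertices. This contradicts the maximality of $s_k$ in the linear order, so $s_k$ must be frontal and $l(\Gamma) \geq 1$.

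The step requiring the most care is the last one, where I pass from the hypothesis that no regular vertex is frontal to the conclusion that $s_k$ is frontal. The subtlety is that $s_k$ is by definition only the maximum of the \emph{linear} order on the special vertices, which on its own guarantees merely that $s_k$ is $\succ$-maximal among the special vertices; a regular vertex could a priori sit above it in the partial order. Ruling this out is exactly where the observation (maximal internal $\Rightarrow$ frontal) combines with the case hypothesis $|X(\Gamma)| = 0$: any internal vertex strictly above $s_k$ can be chosen $\succ$-maximal, is therefore frontal and hence—in this case—special, and its existence contradicts $s_k$ being the linear-order maximum.
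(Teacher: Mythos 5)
Your proof is correct and follows essentially the same route as the paper: the paper reaches a frontal vertex by walking backwards along incoming edges (which, in a finite acyclic graph, is the same as passing to a $\succ$-maximal internal vertex), and then handles the case $|X(\Gamma)|=0$ exactly as you do, by noting that a frontal vertex strictly above $s_k$ would have to be special and hence exceed $s_k$ in the linear order. Your explicit justification that a $\succ$-maximal internal vertex is frontal, and your remark on why $s_k$ need not a priori be $\succ$-maximal, are welcome clarifications but not a different argument.
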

\begin{proof}
 For every non-frontal internal vertex there is an incoming edge outgoing from another internal vertex, and following these edges (from the head towards the tail) one will arrive at a frontal vertex.  If $|X(\Gamma)| >0$, we are done. If $|X(\Gamma)|=0$, then all frontal vertices of $\Gamma$ are special. Assume that for $k = \mathbf{s}(\Gamma)$ the vertex ~$s_k$ is not frontal. Then, as shown above, there exists a directed path from a frontal vertex $s_i$ to $s_k$, where $i <k$. This contradicts the assumption that ~$\Gamma$ is from~ $\M$. Therefore~ $s_k$ is frontal and $l(\Gamma) \geq 1$.
\end{proof}
\begin{lemma}\label{L:subgraph}
Let $\Gamma$ be a graph from $M$ with $k \geq 0$ special internal vertices. Then for each $i$ satisfying $0 \leq i \leq k$ there exists a unique admissible partition $\sigma_i$ of the graph $\Gamma$ such that the graph $\Gamma_2^{\sigma_i}$ has no regular frontal vertices and exactly $i$ special vertices.
\end{lemma}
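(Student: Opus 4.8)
The plan is to first reformulate the two requirements on $\sigma_i$ in purely combinatorial terms. An admissible partition amounts to a choice of internal vertices $V_2$ that is downward closed (no edge runs from $V_2$ to $\i \cup V_1$) together with a threshold on the special vertices; demanding that $\Gamma_2^{\sigma_i}$ have exactly $i$ special vertices then forces the special vertices of $V_2$ to be precisely $s_1,\dots,s_i$ and those of $V_1$ to be $s_{i+1},\dots,s_k$. Since a vertex of $V_2$ becomes frontal in $\Gamma_2^{\sigma_i}$ exactly when all of its incoming edges come from $\i \cup V_1$, the condition that $\Gamma_2^{\sigma_i}$ have no regular frontal vertices translates into the statement that every regular vertex of $V_2$ has at least one incoming edge from another vertex of $V_2$.

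For existence I would construct $V_1$ by a top-down peeling process that exploits Lemma \ref{L:frontal}. Starting from $\Gamma$, one repeatedly moves every regular frontal vertex into $V_1$; once no regular frontal vertex remains, the current graph still carries all $k$ special vertices and no regular frontal vertices, which is the partition $\sigma_k$. Iterating the two moves, namely peeling off all regular frontal vertices and then moving the topmost remaining special vertex into $V_1$, produces the partitions $\sigma_k,\sigma_{k-1},\dots,\sigma_0$ in turn. The key point is that whenever no regular frontal vertex remains but special vertices are still present, Lemma \ref{L:frontal} applied to the current $\Gamma_2$ (whose set of regular frontal vertices is empty, so $X(\Gamma_2)=\varnothing$) forces $l(\Gamma_2)>0$, i.e.\ the topmost remaining special vertex is frontal, so removing it is legitimate. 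I would then verify admissibility: since a vertex enters $V_1$ only when it is frontal at the moment of removal, all of its incoming edges come from $\i$ or from previously removed vertices, so every in-neighbor of a vertex of $V_1$ lies in $\i \cup V_1$; hence no edge runs from $V_2$ to $V_1$, the set $V_2$ is downward closed, and the special-vertex threshold holds by construction.

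The delicate part is uniqueness, which I would establish by two inclusions against an arbitrary admissible partition $\sigma$ enjoying the stated properties. For $V_1 \subseteq V_1^\sigma$ I would induct on the order in which vertices are peeled: a peeled special vertex is one of $s_{i+1},\dots,s_k$ and lies in $V_1^\sigma$ by the threshold, while a peeled regular vertex has, by the inductive hypothesis, all of its incoming edges coming from $\i \cup V_1^\sigma$, so were it in $V_2^\sigma$ it would be a regular frontal vertex of $\Gamma_2^\sigma$, contradicting the hypothesis on $\sigma$. For the reverse inclusion $V_2 \subseteq V_2^\sigma$ I would use the absence of regular frontal vertices in $\Gamma_2$: tracing incoming edges backward inside $V_2$ from any regular vertex, the chain stays in $V_2$ and, by acyclicity and finiteness, terminates at a special vertex, necessarily one of $s_1,\dots,s_i$; read forward, this exhibits a directed path inside $V_2$ from some $s_j$ with $j \le i$ to the given vertex. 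As $V_2^\sigma$ is downward closed and contains $s_1,\dots,s_i$, it contains the whole path, hence the given vertex. Combining the inclusions yields $V_1^\sigma = V_1$ and thus $\sigma = \sigma_i$. I expect the main obstacle to be making these two forcing arguments interlock: the no-regular-frontal-vertex condition is local whereas downward closure is a global path condition, and the care lies in the correct bookkeeping of the peeling order together with the repeated appeal to Lemma \ref{L:frontal} on the intermediate graphs.
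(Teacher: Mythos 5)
Your proof is correct, but it is organized around a different construction than the paper's. The paper defines $V_2^{\sigma_i}$ in one step, bottom-up: it is the set consisting of the special vertices $s_i > \dots > s_1$ together with all regular vertices reachable from one of them by a directed path; closure under outgoing edges (hence admissibility) and the absence of regular frontal vertices in $\Gamma_2^{\sigma_i}$ are then immediate, and the remaining verification, including uniqueness, is left to the reader. You instead build the complement $V_1^{\sigma_i}$ top-down by iteratively peeling frontal vertices, which forces you to invoke Lemma \ref{L:frontal} at each stage to certify that the topmost surviving special vertex is frontal --- an appeal the paper's direct construction does not need at all. The two descriptions are of course dual, and your backward-tracing step in the uniqueness argument (every regular non-frontal vertex of $V_2$ is reached by a directed path from some $s_j$ with $j \leq i$) is precisely the paper's characterization of $V_2^{\sigma_i}$ rediscovered; combining it with the downward closure of an arbitrary competitor $V_2^{\sigma}$ gives $V_2^{\sigma_i} \subseteq V_2^{\sigma}$, and your inductive argument on the peeling order gives the reverse containment. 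What your route buys is a fully written-out uniqueness proof (the paper records none) and the observation that the partitions $\sigma_k, \sigma_{k-1}, \dots, \sigma_0$ arise in a single sweep; what it costs is the extra bookkeeping of the intermediate graphs and the repeated use of Lemma \ref{L:frontal}, both of which the reachability definition avoids.
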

\begin{proof}
The partition $\sigma_i$ of $\Gamma$ is constructed as follows. Let $V$ be the set of all internal vertices and $s_k > s_{k-1} >\ldots > s_1$ be the special vertices of $\Gamma$. Define $V_2^{\sigma_i}$ as the subset of $V$ consisting of the special vertices $s_i > s_{i-1} > \ldots > s_1$ and the regular vertices which can be reached by a directed path starting at one of these special vertices. Observe that each edge whose tail is in $V_2^{\sigma_i}$ can have its head in $V_2^{\sigma_i}$ or at the sink. Set $V_1^{\sigma_i} = V \setminus V_2^{\sigma_i}$. It is easy to verify that the partition $\sigma_i$ satisfies the conditions of the lemma.
\end{proof}

For $n \in \Z_{\geq 0}$ denote by $\Lambda_n$ a graph from $\M$ with no internal vertices and $n$ edges. The tensor $\Delta_K^I$ admits the following representation,
\[
      \Delta_K^I = \sum_{n=0}^\infty (\Lambda_n)_K^I = \sum_{[\Gamma] \in \M} \frac{d(\Gamma)}{|Aut(\Gamma)|}\Gamma_K^I,
\]
where $d(\Lambda_n) = n!$ and $d(\Gamma)=0$ if $\Gamma$ has at least one internal vertex. 

We want to find a function $c(\Gamma)$ on $\M$ such that
\begin{equation}\label{E:tensc}
       C_K^I = \sum_{[\Gamma]\in \M} \frac{c(\Gamma)}{|Aut(\Gamma)|} \Gamma_K^I.
\end{equation}
To this end we have to satisfy the condition that  the tensor given by the right-hand side of (\ref{E:tensc}) is, say, right inverse to $E_K^I$. According to  formulas (\ref{E:ekigraph}) and ~(\ref{E:prodab}) the function $c(\Gamma)$ has to satisfy the equation
\begin{equation}\label{E:dgamma}
      d(\Gamma) = \qq(\Gamma)!   \sum_\pi \frac{c(\Gamma_2^\pi)}{\mathbf{s}(\Gamma_1^\pi)!}
\end{equation}
for every graph $\Gamma$ from $\M$. The summation in (\ref{E:dgamma}) is over the admissible partitions $\pi$ of the graph~ $\Gamma$ such that~ $\Gamma_1^\pi$ is from~ $\Ncal$. For $\Gamma=\Lambda_n$ there is only the trivial partition ~$\pi$. For this partition $\Gamma_1^\pi = \Gamma_2^\pi =\Lambda_n$  and (\ref{E:dgamma}) holds if $c(\Lambda_n)=1$.

Now assume that ~$\Gamma$ has at least one internal vertex and $k \geq 0$ special vertices $s_k > s_{k-1} > \ldots > s_1$. We have to determine the function $c(\Gamma)$ such that the right-hand side of (\ref{E:dgamma}) be zero. We will be looking for the function $c(\Gamma)$ using the following ansatz. According to Lemma \ref{L:subgraph}, there is an admissible partition $\sigma_k$ of $\Gamma$ such that the graph $\tilde \Gamma := \Gamma_2^{\sigma_k}$ has no regular frontal vertices and $k$ special vertices, i.e., $\mathbf{s}(\Gamma) = \mathbf{s}(\tilde\Gamma)$. We will assume that
\[
     c(\Gamma) = (-1)^{R(\Gamma) - R(\tilde\Gamma)} c(\tilde\Gamma),
\]
where $R(\Gamma)$ denotes the number of regular internal vertices of the graph~ $\Gamma$. We will show that under this assumption the function $c(\Gamma)$ is uniquely determined on the graphs with no regular frontal vertices, and therefore on all graphs from $\M$.

An admissible partition~ $\pi$ of $\Gamma$ such that $\Gamma^\pi_1$ is from $\Ncal$ is determined by an arbitrary subset $Y \subset X(\Gamma)$ and an integer $j$ satisfying $k \geq j \geq k - l(\Gamma)$, so that the set of internal vertices of $\Gamma$ inherited by $\Gamma_1^\pi$ is $Y \cup \{s_{j+1}, s_{j+2}, \ldots, s_k\}$ and the special vertices of $\Gamma_2^\pi$ are $s_j > s_{j-1} > \ldots > s_1$. Observe that if $\pi$ is an admissible partition of $\Gamma$ such that $\Gamma_2^\pi$ has $t$ special vertices, then $\widetilde {\Gamma_2^\pi} \cong \Gamma_2^{\sigma_t}$.  We have, using the ansatz,
\begin{eqnarray}\label{E:ansatz}
    \sum_\pi \frac{c(\Gamma_2^\pi)}{\mathbf{s}(\Gamma_1^\pi)!} = \sum_\pi \frac{(-1)^{R(\Gamma_2^\pi) - R(\widetilde{\Gamma_2^\pi})} c(\widetilde{\Gamma_2^\pi})}{\mathbf{s}(\Gamma_1^\pi)!} =\\
\sum_{j = k - l(\Gamma)}^k \frac{c(\Gamma_2^{\sigma_j})}{(k-j)!} \sum_{Y \subset X(\Gamma)} (-1)^{R(\Gamma) - |Y| - R(\Gamma_2^{\sigma_j})}. \nonumber
\end{eqnarray}
 If $X(\Gamma)$ is nonempty, then
\[
     \sum_{Y \subset X(\Gamma)} (-1)^{|Y|} =0.
\]
Therefore, the value of the expression (\ref{E:ansatz}) is zero, which agrees with the condition (\ref{E:dgamma}). Now assume that $X(\Gamma)$ is empty, i.e., the graph $\Gamma$ has no regular frontal vertices. Then the following equation has to be satisfied,
\begin{equation}\label{E:firsteqn}
     \sum_{j = k - l(\Gamma)}^k \frac{  (-1)^{R(\Gamma)  - R(\Gamma_2^{\sigma_j})} c(\Gamma_2^{\sigma_j})}{(k-j)!} = 0.
\end{equation}
For $i$ satisfying $0 \leq i \leq k$ we set $\Gamma_i := \Gamma_2^{\sigma_i}$. In particular, $\Gamma_k = \Gamma, \Gamma_0 = \Lambda_k$, and $\mathbf{s}(\Gamma_i) = i$.  Starting with the graph $\Gamma_i$ in (\ref{E:firsteqn}) instead of $\Gamma$  for $1 \leq i \leq k$, we obtain a triangular system of $k+1$ equations in the variables $c(\Gamma_i), 0 \leq i \leq k,$ whose matrix is unipotent. The first $k$ equations are
\begin{equation}\label{E:triangsyst}
     \sum_{j = i - l(\Gamma_i)}^i \frac{  (-1)^{R(\Gamma_i)  - R(\Gamma_j)} c(\Gamma_j)}{(i-j)!} = 0
\end{equation}
for $i = k, k-1, \ldots, 1$, and the last one is $c(\Gamma_0) =1$. It can be solved explicitly providing the value of $c(\Gamma)$ for a graph $\Gamma$ with no frontal regular vertices, whence one can obtain the function $c(\Gamma)$ on the whole set $\M$.

\begin{theorem}\label{T:cofgamma}
Let $\Gamma$ be a graph from $\M$ with $k$ special vertices $s_k > s_{k-1} > \ldots > s_1$ and $\sigma_i, 0 \leq i \leq k,$ be the admissible partition of $\Gamma$ such that the graph $\Gamma_i = \Gamma_2^{\sigma_i}$ has $i$ special vertices $s_i > s_{i-1} > \ldots > s_1$ and no frontal regular vertices. Let $l_i$ be the length of the longest chain $s_i > s_{i-1} > \ldots$ of frontal special vertices in the graph $\Gamma_i$.  If $\Gamma$ has no special vertices, then
\begin{equation}\label{E:gammel}
      c(\Gamma) = (-1)^{R(\Gamma)}.
\end{equation}
If $k \geq 1$, then
\begin{equation}\label{E:cofgamma}
     c(\Gamma) =  \sum_{n=0}^{k-1}  \sum_{k_0, \ldots, k_{n+1}} \frac{(-1)^{R(\Gamma)+n+1}}{(k_0-k_1)! (k_1-k_2)! \ldots (k_n - k_{n+1})!},
\end{equation}
where the summation in the inner sum is over the $(n+2)$-tuples of integers $\{k_0, \ldots, k_{n+1}\}$ such that $k_0=k, \  k_{n+1}=0$, and $l_{k_i}  \geq k_i - k_{i+1} \geq ~1$ for $0 \leq i \leq n$.
\end{theorem}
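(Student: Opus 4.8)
The plan is to read (\ref{E:triangsyst}) as a unipotent triangular linear system for the unknowns $c(\Gamma_i)$, $0\le i\le k$, solve it in closed form, and then transport the answer to the general graph $\Gamma$ via the ansatz already set up before the theorem. The crucial preliminary step is a change of unknowns that removes the sign prefactors. Setting $b_i:=(-1)^{R(\Gamma_i)}c(\Gamma_i)$ and multiplying the $i$-th equation of (\ref{E:triangsyst}) by $(-1)^{R(\Gamma_i)}$, I would use $(-1)^{2R(\Gamma_i)-R(\Gamma_j)}=(-1)^{R(\Gamma_j)}$ to collapse the whole system into the sign-free form
\[
\sum_{j=i-l_i}^{i}\frac{b_j}{(i-j)!}=0\qquad(1\le i\le k).
\]
Since $\Gamma_0=\Lambda_n$ has no internal vertices, we have $R(\Gamma_0)=0$ and $c(\Gamma_0)=1$, so the initial condition is $b_0=1$; isolating the diagonal term $j=i$ turns this into the explicit recursion $b_i=-\sum_{j=i-l_i}^{i-1}b_j/(i-j)!$.

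Next I would solve this recursion by unfolding it into a sum over decreasing chains. I claim
\[
b_i=\sum_{t\ge 0}(-1)^t\sum_{\substack{i=m_0>m_1>\ldots>m_t=0\\ 1\le m_s-m_{s+1}\le l_{m_s}}}\ \prod_{s=0}^{t-1}\frac{1}{(m_s-m_{s+1})!},
\]
where for $i=0$ only the empty chain ($t=0$) contributes, giving $b_0=1$. This is proved by induction on $i$: substituting the formula for each $b_j$ with $j<i$ into the recursion, every admissible chain from $i$ to $0$ is produced exactly once by choosing its initial step $i\to j$ (constrained by $1\le i-j\le l_i$) and appending an admissible chain from $j$ to $0$, while the prefactor $-1/(i-j)!$ supplies precisely the missing sign and factorial. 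Reindexing with $t=n+1$ and $m_s=k_s$ identifies this chain sum for $i=k$ with the inner sum of (\ref{E:cofgamma}), the admissibility condition $1\le m_s-m_{s+1}\le l_{m_s}$ becoming $1\le k_i-k_{i+1}\le l_{k_i}$.

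Finally I would pass back to $c(\Gamma)$. By the ansatz $c(\Gamma)=(-1)^{R(\Gamma)-R(\Gamma_k)}c(\Gamma_k)$ together with $b_k=(-1)^{R(\Gamma_k)}c(\Gamma_k)$, one gets $c(\Gamma)=(-1)^{R(\Gamma)}b_k$; multiplying the chain sum by $(-1)^{R(\Gamma)}$ and writing $(-1)^t=(-1)^{n+1}$ yields exactly (\ref{E:cofgamma}). The degenerate case $k=0$ is immediate: then $\Gamma_k=\Gamma_0=\Lambda_n$ with $b_0=1$, so $c(\Gamma)=(-1)^{R(\Gamma)}$, which is (\ref{E:gammel}). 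I expect no serious analytic obstacle here once the substitution $b_i=(-1)^{R(\Gamma_i)}c(\Gamma_i)$ is in place; the only genuine work is the combinatorial accounting in the induction, namely verifying that the unfolding produces each admissible tuple once and only once and that the per-step cutoff is governed by $l_{m_s}=l(\Gamma_{m_s})$ rather than by a single fixed bound.
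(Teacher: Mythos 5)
Your proposal is correct and follows the paper's own route: the paper derives the unipotent triangular system (\ref{E:triangsyst}) together with the normalization $c(\Gamma_0)=1$ and simply asserts that it "can be solved explicitly," and your substitution $b_i=(-1)^{R(\Gamma_i)}c(\Gamma_i)$ followed by unfolding the recursion $b_i=-\sum_{j=i-l_i}^{i-1}b_j/(i-j)!$ into a sum over admissible chains is exactly the omitted computation, yielding (\ref{E:cofgamma}) after multiplying by $(-1)^{R(\Gamma)}$ and handling $k=0$ via the ansatz. The chain-counting induction and the identification $l_{m_s}=l(\Gamma_{m_s})$ are handled correctly, so nothing is missing.
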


{\it Example.} If $\Gamma$ is a graph from $\Ncal$ with no regular vertices and $k$ special vertices, then so are the graphs $\Gamma_i$ and $l(\Gamma_i)=i$. It is easy to check that $c(\Gamma_i) = \frac{(-1)^i}{i!}$. In particular, $\Gamma = \Gamma_k$ and $c(\Gamma) = \frac{(-1)^k}{k!}$.

We conclude that the star product $\ast_u$ is given by the formula
\begin{equation}\label{E:prodstaru}
    f \ast_u g =  \left(\nabla_{\bar L} f\right) \left(\sum_{[\Gamma]\in \M} \frac{c(\Gamma)}{|Aut(\Gamma)|} \Gamma^{\bar LK} \right) \left(\nabla_K g\right).
\end{equation}

{\it Remark.} If, in the notations of Section \ref{S:inv},  $(E,u)$ is the trivial line bundle with $u=1$, then $\ast_u =~ \ast$, the graphs in $\M$ have no special vertices, $c(\Gamma)$ is given by formula~ (\ref{E:gammel}), and (\ref{E:prodstaru}) renders Gammelgaard's formula  for the star product ~$\ast$ with a different sign convention (in the original Gammelgaard's formula $c(\Gamma)=1$) and with the formal parameter $\nu$ incorporated in the tensor $\Gamma^{\bar LK}$.


\begin{thebibliography}{99}
\bibitem{Aa} Aastrup, J.: Deformation quantization of endomorphism bundles. {\it J. Reine Angew. Math.} {\bf 579} (2005) 203--236. 
\bibitem{AE} S. Twareque Ali, S. and Englis, M.: A matrix-valued Berezin-Toeplitz quantization,{\it J. Math. Phys.} {\bf 48} (2007), 053504, 14 pp. 
\bibitem{AL}Alekseev, A., Lachowska, A.: Invariant $*$-products on coadjoint orbits and the Shapovalov pairing.
{\it Comment. Math. Helv.} {\bf 80} (2005),  795--810.
\bibitem{BFFLS} Bayen, F., Flato, M., Fronsdal, C.,
Lichnerowicz, A., and Sternheimer, D.: Deformation theory and
quantization. I. Deformations of symplectic structures. {\it
Ann. Physics} {\bf 111} (1978), no. 1, 61 -- 110.
\bibitem{Ber} Berezin, F.A.: Quantization in complex symmetric spaces. {\it Math. USSR-Izv.} {\bf 39} (1975), 363--402.
\bibitem{BBEW} Bordemann, M., Brischle, M.,  Emmrich, C., and
Waldmann, S.: Phase space reduction for star-products: An explicit
construction for $\mathbf{P}^n$. {\it Lett. Math. Phys.} {\bf 36}, (4)(1996), 357--371.
\bibitem{BW} Bordemann, M., Waldmann, S.: A Fedosov star product
of the Wick type for K\"ahler manifolds. {\it Lett. Math. Phys.}
{\bf 41} (3) (1997), 243 -- 253.
\bibitem{E} Engli\v{s}, M.: Weighted Bergman kernels and quantization, {\it Commun. Math. Phys.} {\bf 227} (2002), 211--241.
\bibitem{F1} Fedosov, B.:  A simple geometrical construction of deformation quantization.
{\it J. Differential Geom.} {\bf 40}  (1994),  no. 2, 213--238.
\bibitem{F2} Fedosov, B.:  Deformation quantization and index theory. {\it Mathematical Topics.} Vol. 9. Akademie Verlag, Berlin (1996).
\bibitem{G} Gammelgaard, N. L.: A Universal Formula for Deformation Quantization on K{\"a}hler Manifolds,
arXiv:1005.2094v2.
\bibitem{GE} Gutkin, E.: Overcomplete subspace systems and operator symbols. {\it Func. Anal. Appl.} {\bf 9} (1975) 260 -- 262.
\bibitem{GR} Gutt, S. and Rawnsley, J.: Natural star products on symplectic manifolds and quantum moment maps. {\it Lett. Math. Phys.} {\bf 66} (2003), 123 --139.
\bibitem{Fan} Karabegov, A.: Operator-valued $pq$-symbols and geometric quantization. {\it Func. Anal. and Appl.} {\bf 29} (1995) 133--135.
\bibitem{CMP1} Karabegov, A.: Deformation quantizations with separation of variables on a K\"ahler manifold.
{\it Commun. Math. Phys.} {\bf 180}  (1996),  no. 3, 745--755.
\bibitem{CMP3} Karabegov, A.: Formal symplectic groupoid of a deformation quantization. {\it Commun. Math. Phys.} {\bf 258} (2005), 223--256. 
\bibitem{LMP} Karabegov, A.: Star products with separation of variables admitting a smooth extension. {\it Lett. Math. Phys.} {\bf 101} (2012), 125 -- 142.
\bibitem{CMP5} Karabegov, A.: On Gammelgaard's formula for deformation quantization with separation of variables.  {\it Commun. Math. Phys.} {\bf 322} (2013), 229-253.
\bibitem{KSch} Karabegov, A., Schlichenmaier, M.: Identification of Berezin-Toeplitz deformation quantization. {\it J. Reine Angew. Math.} {\bf 540} (2001), 49-76.
\bibitem{K} Kontsevich, M.: Deformation quantization of Poisson manifolds, I.  {\it Lett. Math. Phys.} {\bf 66} (2003), 157 -- 216.
\bibitem{LTW} Leichtnam, E., Tang, X., and Weinstein, A.: Poisson geometry and deformation quantization near a strictly pseudoconvex boundary. {\it Journal of the European Mathematical Society}, {\bf 9}, (2007), 681--704.
\bibitem{MM} Ma, X., Marinescu, G.: Holomorphic Morse Inequalities and Bergman Kernels. {\it Progress in Mathematics.} Vol. 254. Birkh\"auser, Basel (2007).
\bibitem{NW} Neumaier, N., Waldmann, S.: Morita equivalence bimodules for Wick type star products. {\it J. Geom. Phys.} {\bf 47} (2003), 177 -- 196.
\bibitem{NT} Nest, R., Tsygan, B.: Algebraic index theorem. {\it Commun. Math. Phys.} {\bf 172} (1995), 223--262.
\bibitem{PPT} Pflaum, M., Posthuma, H., and Tang, X.: Cyclic cocycles on deformation quantizations and higher index theorems. {\it Advances in Math.} {\bf 223} (2010), 1958-2021. 
\bibitem{RT} Reshetikhin, N., Takhtajan, L.: Deformation
quantization of K\"ahler manifolds.
L. D. Faddeev's Seminar on Mathematical Physics, Amer. Math.
Soc. Transl. Ser. 2, {\bf 201}, Amer. Math. Soc., Providence,
RI, (2000), 257--276.
\bibitem{Sch} Schlichenmaier, M.: Berezin-Toeplitz quantization of compact K\"ahler manifolds. In: Quantization, Coherent States and Poisson Structures, Proc. XIV'th Workshop on Geometric Methods in Physics (Bialowieza, Poland, 9-15 July 1995), A. Strasburger, S. T. Ali, J.-P. Antoine, J.-P. Gazeau, and A. Odzijewicz, eds., Polish Scientific Publisher PWN (1998), 101 -- 115.
\bibitem{Schirm} Schirmer, J.: A star product for complex Grassmann manifolds, arXiv:q-alg/9709021.
\bibitem{Xu1} Xu, H.: An explicit formula for the Berezin star product.  {\it Lett. Math. Phys.} {\bf 101} (2012), 239 -- 264.
\bibitem{Xu2} Xu, H.: On a graph-theoretic formula of Gammelgaard for Berezin-Toeplitz quantization. {\it Lett. Math. Phys.} {\bf 103} (2013), 145 -- 169.
\end{thebibliography}
\end{document}